\definecolor{darkred}{rgb}{0.5,0,0}
\definecolor{darkgreen}{rgb}{0,0.5,0}
\definecolor{darkblue}{rgb}{0,0,0.5} 	\hypersetup{colorlinks,linkcolor=darkblue,filecolor=darkgreen,urlcolor=darkred,citecolor=darkblue}
\numberwithin{equation}{section}
\DeclareMathOperator{\real}{Re}
\DeclareMathOperator{\imag}{Im}
\DeclareMathOperator{\spt}{supp}
\DeclareMathOperator{\dom}{dom}
\DeclareMathOperator*{\esssup}{ess\,sup}
\DeclareMathOperator{\sgn}{sgn}
\let\originali\i
\renewcommand{\i}{\ifthenelse{\boolean{mmode}}{\mathbf i}{\originali}}
\renewcommand{\a}{\mathfrak{a}}
\renewcommand{\b}{\mathfrak{b}}
\newcommand{\q}{\mathfrak{q}}
\renewcommand{\theta}{\vartheta}
\renewcommand{\phi}{\varphi}
\renewcommand{\d}{\mathfrak{d}}
\renewcommand{\L}{\mathcal{L}}
\newcommand{\ii}{\mathrm{i}}
\newcommand{\e}{\mathbf{e}}
\newcommand{\euler}{\mathrm{e}}
\newcommand{\T}{\mathbb{T}}
\newcommand{\D}{\mathcal{D}}
\newcommand{\F}{\mathcal{F}}
\newcommand{\C}{\mathbb{C}}
\newcommand{\R}{\mathbb{R}}
\newcommand{\N}{\mathbb{N}}
\newcommand{\K}{\mathcal{K}}
\newcommand{\diff}{\mathrm{d}}
\newcommand{\trans}{\top}
\newcommand{\I}{\mathfrak{I}}
\renewcommand{\S}{\mathcal{S}}
\newcommand{\abs}[1]{{\left\lvert#1\right\rvert}}
\newcommand{\biggabs}[1]{{\bigg\lvert#1\bigg\rvert}}
\newcommand{\norm}[1]{{\left\lVert#1\right\rVert}}
\newcommand{\dual}[1]{\langle#1\rangle}
\newcommand{\loc}{\mathrm{loc}}
\newtheorem{theorem}{Theorem}[section]
\newtheorem{lemma}[theorem]{Lemma}
\newtheorem{proposition}[theorem]{Proposition}
\theoremstyle{definition}
\newtheorem{definition}[theorem]{Definition}
\newtheorem{example}[theorem]{Example}
\theoremstyle{remark}
\newtheorem{remark}[theorem]{Remark}
\title{Unique determination of a Kato class potential from boundary data}
\author{Clemens Bombach}
\affil{Technische Universit\"at Chemnitz, Fakult\"at f\"ur Mathematik, Reichenhainer Str. 39, 09126 Chemnitz, Germany}
\begin{document}

\maketitle
\begin{abstract}
We prove that a Kato class potential $V$ defined on an open, bounded set in $\R^3$ with Lipschitz boundary is uniquely determined by the Dirichlet-to-Neumann operator associated to the equation	$-\Delta u + Vu = 0\,.$
\end{abstract}

\section{Introduction}

\subsection*{Statement of the main result}

Let $n\geq 3$ and $U$ be an open, bounded set in $\R^n$ with Lipschitz boundary. We say that a function $V \in L^1_\loc(\R^n)$ lies in the Kato class $\K_n$ if and only if
\begin{equation*}
	\lim_{r \to 0} \sup_{x \in \R^n}\, \int\limits_{\abs{x - y} < r} \abs{x-y}^{2 -n}\abs{V(y)}\,\diff y = 0\,.
\end{equation*}
Given a function $V$ defined on some open, bounded set $U \subseteq \R^n$ with Lipschitz boundary, we consider the equation
\begin{equation}\label{eq:schroedinger}
	-\Delta u + Vu = 0~ \text{in}~U\, .
\end{equation}
If $V$ is in a class of sufficiently regular functions, one can find weak solutions $u \in H^1(U)$ of this PDE. Setting $\Gamma = \partial U$, and assuming that $0 \notin \sigma(A_V)$, where $A_V$ is the realization of $-\Delta + V$ with Dirichlet boundary conditions on $L^2(U)$, we have a well-defined solution operator $P_V:H^{1/2}(\Gamma) \to H^1(U)$ mapping $\phi$ to the solution $u$ of the Dirichlet problem
\begin{equation}\label{eq:Dirichlet_problem}
	\begin{aligned}
		-\Delta u + Vu &= 0~ \text{in}~ U \\
		u|_\Gamma &= \phi\,.
	\end{aligned}
\end{equation}
Using this operator $P_V$, we define the \emph{Dirichlet-to-Neumann operator} as the operator
\[
\Lambda_V:H^{1/2}(\Gamma) \to H^{-1/2}(\Gamma),\quad \phi \mapsto \partial_\nu P_V\phi\,,
\]
where $\partial_\nu$ denotes the weak derivate on $\Gamma$ in the direction of the outer normal vector field. For a given class of potentials $\mathcal{F}$, a natural question is whether the Dirichlet-to-Neumann operator associated to a potential determines the potential uniquely. That is, we ask whether the mapping \[ \mathcal F \ni V \mapsto \Lambda_V \] is injective. In this work, we answer this question in the affirmative for the Kato class $\K_3$.

In the work \cite{chanillo1990problem} the authors proved that $\Lambda_V$ uniquely determines $V$ if $V$ is either in $L^p(U)$ for $p > n/2$ or has small norm in certain Morrey spaces (also called Fefferman-Phong classes). An extension of this result to potentials in $L^{n/2}(U)$ was obtained by Nachman. His (previously unpublished) proof can be found in the paper \cite{dos2013determining} by Dos Santos Ferreira, Kenig and Salo, where the authors considered Dirichlet-to-Neumann operators defined on a certain class of admissible manifolds.

Our main result is as follows:
\begin{theorem}\label{main_theorem}
	Let $U \subseteq \R^3$ be an open, bounded set with Lipschitz boundary and $V_1, V_2$ be in $\K_3$ with $\spt V_1,V_2 \subseteq  U$. Suppose that $0 \notin \sigma(A_{V_j})$ where $A_{V_j}$ is the realization of the Schrödinger operator $-\Delta + V$ on $L^2(U)$ with Dirichlet boundary conditions $j=1,2$. If ${\Lambda}_{V_1} = {\Lambda}_{V_2}$, then it follows that $V_1 = V_2$.
\end{theorem}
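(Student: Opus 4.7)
The plan is to follow the Sylvester--Uhlmann programme, adapted to Kato class potentials. The first step is the classical reduction to an integral identity: from $\Lambda_{V_1}=\Lambda_{V_2}$ together with the fact that $\spt V_1,\spt V_2\subseteq U$, one derives via Green's formula that
\[
\int_U (V_1-V_2)\,u_1 u_2\,\diff x = 0
\]
for every pair of weak $H^1$-solutions $u_j$ of $-\Delta u_j + V_j u_j = 0$ in $U$. Extending $V_1,V_2$ by zero outside $U$, so that they lie in $\K_3$ on all of $\R^3$ with compact support, we may look for such solutions defined on a large ball $B\supseteq U$.

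The second step is to produce complex geometrical optics (CGO) solutions of the form
\[
u_j(x) = \euler^{\zeta_j\cdot x}(1+\psi_j(x)),\qquad \zeta_j\in\C^3,\ \zeta_j\cdot\zeta_j=0,
\]
with $\zeta_1+\zeta_2=-\ii\xi$ for a given $\xi\in\R^3$ and $\abs{\zeta_j}\to\infty$. For such solutions the product $\euler^{\zeta_1\cdot x}\euler^{\zeta_2\cdot x}=\euler^{-\ii\xi\cdot x}$, so the integral identity becomes
\[
\widehat{(V_1-V_2)}(\xi)\;+\;\int_{\R^3}(V_1-V_2)\,\euler^{-\ii\xi\cdot x}(\psi_1+\psi_2+\psi_1\psi_2)\,\diff x \;=\;0.
\]
Provided the correction terms vanish as $\abs{\zeta_j}\to\infty$, the Fourier transform of $V_1-V_2$ vanishes identically and $V_1=V_2$.

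The substitution $u_j=\euler^{\zeta_j\cdot x}(1+\psi_j)$ reduces the Schrödinger equation to
\[
(-\Delta - 2\zeta_j\cdot\nabla + V_j)\psi_j = -V_j,
\]
so the core technical task — and the main obstacle — is to invert the conjugated Laplacian $L_{\zeta_j}:=-\Delta - 2\zeta_j\cdot\nabla$ perturbed by the multiplication operator $V_j$ on a Banach space adapted to the Kato class. For $V\in L^{n/2}$ Nachman's argument uses weighted $L^2$ or $L^p$ estimates for $L_\zeta^{-1}$; these are not available for arbitrary $V\in\K_3$, which is a strictly larger space and may contain singularities of the type $\abs{x}^{-2}(\log\abs{x}^{-1})^{-\alpha}$. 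Instead the plan is to build a Banach space $X$ in which (i) $L_\zeta^{-1}$ is bounded with norm $o(1)$ as $\abs{\zeta}\to\infty$, and (ii) multiplication by any $V\in\K_3$ of compact support is bounded, uniformly small on a neighbourhood of the support when the Kato modulus is small there. A natural candidate is a space defined by the Kato norm itself, which is exactly the kind of observability/resolvent estimate in a Banach space advertised in the paper's broader setting. With such an $X$, the Neumann series shows $L_{\zeta_j}+V_j$ is invertible on $X$ for large $\abs{\zeta_j}$ and $\|\psi_j\|_X\to 0$.

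The final step is the limiting argument: the $X$-norm decay of $\psi_j$, combined with an $L^\infty$- or dual-Kato-type control of the ambient test function $(V_1-V_2)\euler^{-\ii\xi\cdot x}$, forces the correction integrals above to vanish in the limit $\abs{\zeta_j}\to\infty$. Thus $\widehat{V_1-V_2}\equiv 0$ and the theorem follows. The truly delicate part is to choose $X$ so that both the resolvent estimate for $L_\zeta$ and the smallness of multiplication by a Kato class potential hold simultaneously; everything else is the standard Sylvester--Uhlmann skeleton.
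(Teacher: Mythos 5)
Your skeleton is the right one, and the first and last steps match the paper: the integral identity $\int_U (V_1-V_2)u_1u_2\,\diff x=0$ from $\Lambda_{V_1}=\Lambda_{V_2}$, the phase choice with $\zeta_1+\zeta_2$ proportional to $\ii\xi$, and Fourier inversion at the end are all carried out essentially as you describe (Section~\ref{sec:main} of the paper). However, there is a genuine gap in the middle step, which you yourself flag as ``the truly delicate part'' but do not actually resolve: you propose to work in a Banach space $X$ ``defined by the Kato norm itself'' in which $L_\zeta^{-1}$ would be $o(1)$-bounded and multiplication by Kato potentials would be bounded, but you do not say what $X$ is, nor why such a space exists. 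In fact the Kato norm does not furnish a function space with the needed resolvent estimate, and the paper does not pursue this route.

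What the paper actually does is substantially different and is the technical heart of the result. It works in $L^2(\R^3)$ throughout, but in \emph{symmetrized} form: writing $v=\abs{V}^{1/2}$, $w=V/\abs{V}^{1/2}$, one sets $r_z=G_z(vf_z)$ and reduces to inverting $I+wG_zv$ on $L^2$. The smallness $\norm{wG_zv}_{L^2\to L^2}\to 0$ is obtained in three stages. First, a pointwise bound $\abs{\mathfrak{E}_z(x)}\le A/\abs{x}$ on the Faddeev-type fundamental solution of the conjugated Laplacian (Lemma~\ref{lem:z_pointwise}, proved via a contour-integral argument in Appendix~\ref{sec:point}) gives $L^1\to L^1$ and $L^\infty\to L^\infty$ bounds for $\abs{V}G_z$ and $G_z\abs{V}$ with constants proportional to the Kato norm; Stein interpolation then yields $\norm{\abs{V}^{1/2}G_z\abs{W}^{1/2}}_{L^2\to L^2}\le C\norm{V}_{\K_3}^{1/2}\norm{W}_{\K_3}^{1/2}$ uniformly in $z$ (Lemma~\ref{main_lemma}). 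Second, the approximation Lemma~\ref{lem:approx} lets one split $V=V_1+V_2$ with $\norm{V_1}_{\K_3}$ arbitrarily small and $V_2\in L^\infty$. Third, for the $L^\infty$ block one invokes the classical Sylvester--Uhlmann weighted-$L^2$ estimate $\norm{G_zu}_{L^2}\le C\abs{z}^{-1}\norm{u}_{L^2}$ on bounded sets (Lemma~\ref{SU_estimate}); combining, $\norm{\abs{V}^{1/2}G_z\abs{W}^{1/2}}_{L^2\to L^2}\to 0$ as $\abs{z}\to\infty$ (Lemma~\ref{symmetrized2}), and Neumann series in $L^2$ gives $r_z$ with $\norm{r_z}_{L^2(U)}\to 0$. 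None of these ingredients appears in your proposal, and without them the claimed decay of the correction terms $\psi_j$ is unsupported; in particular the pointwise estimate on $\mathfrak{E}_z$ and its compatibility with the Kato norm via interpolation is precisely what makes the extension beyond $L^{n/2}$ possible, so an argument that omits it has not actually dealt with the difficulty specific to Kato class potentials.
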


We remark that neither the inclusion $L^{n/2} \subseteq \K_n$ nor the inclusion $\K_n \subseteq L^{n/2}$ are true. Thus, our result complements the aforementioned earlier results.

\subsection*{Related problems}

The inverse problem considered here is closely related to the Calder\'{o}n problem for isotropic conductivities, where instead of the Schrödinger operator $-\Delta + V$ an elliptic operator $-{\nabla \cdot \sigma \nabla}$ is studied and the goal is to determine the scalar-valued function $\sigma$ from the associated Dirichlet-to-Neumann operator $\Lambda_\sigma$. This function $\sigma$ can be viewed as an electrical conductivity and $\Lambda_\sigma$ corresponds to voltage and current measurements at the boundary.

In the paper \cite{sylvester1987global} by Sylvester and Uhlmann, uniqueness was proven for 
\[ \sigma \in C^\infty(\overline U) \quad\text{and} \quad n \geq 3 \,.\]
An important step in their proof is to transform the equation $-\nabla \cdot \sigma \nabla u = 0$ into the equivalent Schrödinger-type equation $-\Delta u + Vu = 0$ with $V = \sigma^{-1/2}\Delta(\sigma^{1/2})$. Sylvester and Uhlmann then prove that $V$ is uniquely determined by $\Lambda_V$. To achieve this, they construct rapidly oscillating solutions to the equation \eqref{eq:schroedinger} that behave like $\euler^{\ii z\cdot x}$ for certain large complex vectors $z$. These solutions are called Complex Geometrical Optics solutions or CGO solutions.

In fact, for the Sylvester-Uhlmann construction of CGO solutions, it is only \linebreak required that $V \in L^\infty(U)$. Using a result of Kohn and Vogelius \cite{kohn1984determining} which states that $\Lambda_\sigma$ uniquely determines $\sigma|_\Gamma$ and all its derivatives restricted to $\Gamma$, they then showed that $\Lambda_\sigma$ also uniquely determines $\sigma$ in the interior of $U$.

Since then, there have been various attempts to extend this result. The two-dimen-sional case was solved for $\sigma \in L^\infty$ in \cite{astala2006calderon}. For $n \geq 3$, extensive work has been done for various classes of nonsmooth conductivities. The following is a partial overview of results in that direction: In the work \cite{brown1996global}, the smoothness assumption on $\sigma$ was reduced to $\nabla \sigma \in C^{3/2 + \varepsilon}$ for arbitrary $\varepsilon > 0$. Haberman and Tataru (see \cite{haberman2013uniqueness}) proved uniqueness for small conductivities $\sigma \in W^{1,\infty}$. The smallness assumption was removed by Caro and Rogers in \cite{caro2016global}. Also, a generalization to $\sigma \in W^{1,n}$ for $n = 3,4$ was obtained by Haberman in \cite{haberman2015uniqueness} who also proved uniqueness in dimension $5,6$ under only slightly stronger Sobolev-type conditions. The range of admissible Sobolev exponents was further extended in the works \cite{ham2019uniqueness} and \cite{ponce2020bilinear}. Similarily, the regularity assumptions needed for the boundary determination result originally established in \cite{kohn1984determining} were greatly reduced by Brown in \cite{brown2001recovering}.

As shown in \cite{ham2019uniqueness}, a consequence of Haberman's work \cite{haberman2015uniqueness} is that for $V_1,V_2 \in W^{-1,3}$ the identity $\Lambda_{V_1} = \Lambda_{V_2}$ implies that $V_1 = V_2$. Our result also holds for some $V_1,V_2 \notin W^{-1,3}$, see Example \ref{example}.

The inverse problem considered here appears to be closely related to the unique continuation problem since both involve Carleman estimates, which are uniform $L^p$-estimates for the conjugated Laplacian $\euler^{-\ii \phi}\Delta \euler^{\ii \phi}$ where $\phi$ is taken from a certain set of complex phase functions. In this work, we will only consider complex-valued linear phase functions. Regarding singular potentials, we mention the result of Sawyer \cite{sawyer1984unique} who proved the unique continuation property for the differential inequality
\[
\abs{\Delta u} \leq \abs{Vu}
\] 
with $V$ in the Kato class $\K_d$ for $d \geq 3$. "Unique continuation property" in this context means that functions $u \in W^{2,1}(U)$ that satisfy the above differential inequality and vanish on some open subset of $U$ must in fact vanish everywhere on $U$. It was conjectured in \cite{simon1982schrodinger} that unique continuation holds for the Kato class in all dimensions. This conjecture is still not resolved so far, with only a positive result in the case of radial Kato potentials obtained in \cite{fabes1990partial}. Unique continuation for $V \in L^{n/2}$ was proven in \cite{kenig1987uniform}. For results regarding general second-order elliptic operators with non-smooth coefficients we refer to \cite{koch2001carleman}.

Another related problem is to prove that for a periodic potential $V \in \K_n$, the Schrödinger operator $-\Delta + V$ on $L^2(\R^n)$ has absolutely continuous spectrum. For $n=2$ and $n=3$, this was carried out by Shen in \cite{shen2001absolutekato}. We will use some techniques similar to the ones used in the latter work and the article \cite{shen2001absolute}, where Shen treated the case $V \in L^{n/2}(\T^n)$ for $n \geq 3$.

\subsection*{Construction of CGO solutions}

The main technical obstruction in the proof of Theorem \ref{main_theorem} is the construction of CGO solutions. Therefore, most of the work is concerned with proving the following existence theorem:
\begin{theorem}\label{CGO_theorem}
	Let $V \in \K_3$ with $\spt V \subseteq U$, where $U \subseteq \R^3$ is open and bounded with Lipschitz boundary. There exists a $s_0 > 0$ such that for all $z \in \C^3$ with $\abs{z} > s_0$ and $z \cdot z = 0$ we can find elements $u_z, r_z \in H^1(U)$ such that
	\begin{equation}\label{CGO_schroedinger}
		-\Delta u_z + Vu_z = 0 
	\end{equation}
	and
	\begin{equation}\label{CGO_form}
		u_z(x) = \euler^{\ii z \cdot x}(1 + r_z(x))
	\end{equation}
	for almost every $x \in U$. The error term $r_z$ can be chosen in such a way that
	\begin{equation*}
		\quad \lim_{z \to \infty} \norm{r_z}_{L^2(U)} = 0\,.
	\end{equation*}
\end{theorem}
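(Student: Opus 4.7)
The plan is to insert the ansatz $u_z(x) = \euler^{\ii z\cdot x}(1 + r_z(x))$ into \eqref{CGO_schroedinger}. Since $z\cdot z = 0$, a direct computation gives $-\Delta u_z + Vu_z = \euler^{\ii z\cdot x}\bigl(L_z r_z + V(1+r_z)\bigr)$, where $L_z := -\Delta - 2\ii z\cdot \nabla$ is the Laplacian conjugated by the linear phase $\euler^{\ii z\cdot x}$. Extending $V$ by zero outside $U$, this reduces the construction to solving
\begin{equation*}
 (L_z + M_V)\, r_z = -V \quad \text{on } \R^3,
\end{equation*}
where $M_V$ denotes multiplication by $V$, and then restricting to $U$.

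The natural route is to produce $r_z$ as a Neumann series, formally writing $r_z = -\sum_{k \ge 0}\bigl(-L_z^{-1} M_V\bigr)^{k} L_z^{-1} V$. Convergence hinges on a Carleman-type resolvent bound of the form
\begin{equation*}
 \norm{L_z^{-1} f}_{X} \le \eta(\abs{z})\, \norm{f}_{Y}, \qquad \eta(s) \to 0 \text{ as } s \to \infty,
\end{equation*}
in a pair of function spaces $X, Y$ adapted to $\K_3$, together with the boundedness of $M_V \from X \to Y$ with operator norm controlled by a local Kato seminorm of $V$. The definition of $\K_3$ supplies, for every $\eps > 0$, a radius $r > 0$ with $\sup_{x} \int_{\abs{x-y} < r} \abs{x-y}^{-1}\abs{V(y)}\,\diff y < \eps$; this local smallness can be exploited by working at the scale $r \sim \abs{z}^{-1}$. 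For $\abs{z}$ exceeding a threshold $s_0$, the composition $L_z^{-1} M_V$ then becomes a contraction on $X$ and the series converges.

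The heart of the proof is the resolvent bound in Kato-adapted spaces. Following Shen's treatment of the absolutely continuous spectrum in \cite{shen2001absolutekato}, I would decompose $L_z$ dyadically with respect to the distance to its characteristic circle $\{\xi \in \R^3 : \abs{\xi}^2 - 2 z\cdot \xi = 0\}$, combining a sharp $L^2$ bound away from this set with a uniformly local estimate in a tubular neighbourhood of it; summing over a dyadic cover of $\R^3$ by cubes of side $\abs{z}^{-1}$ yields a norm compatible with the Kato seminorm and delivers the required decay $\eta(\abs{z}) = o(1)$. Once $r_z$ is in hand, the assertion $\norm{r_z}_{L^2(U)} \to 0$ follows from $\eta(\abs{z}) \to 0$ together with boundedness of $U$, while $H^1(U)$-regularity follows by bootstrapping the identity $-\Delta r_z = 2 \ii z\cdot \nabla r_z - V(1 + r_z)$ via interior elliptic regularity. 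The main obstacle is precisely this Carleman estimate: since $\K_3$ is contained in no $L^p$ space on which the classical Sylvester--Uhlmann or Haberman--Tataru arguments are built, no off-the-shelf $L^p$--$L^{p'}$ bound applies and one must work throughout with norms compatible with the Kato seminorm.
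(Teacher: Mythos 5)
Your outline shares the paper's overall shape (insert the ansatz, reduce to solving a conjugated equation, obtain $r_z$ by a perturbation/Neumann series, appeal to Shen-type estimates for Kato potentials), but the central analytic step is different from the paper's and, as written, has a gap. You posit a decaying resolvent bound
\[
\norm{L_z^{-1}f}_X \leq \eta(\abs z)\norm{f}_Y, \qquad \eta(s)\to 0,
\]
in unspecified ``Kato-adapted'' spaces $X,Y$, and you want $L_z^{-1}M_V$ to be a contraction on $X$. That is not what is actually available. The pointwise bound on the fundamental solution, $\abs{\mathfrak{E}_z(x)}\leq A\abs{x}^{-1}$ (Lemma~\ref{lem:z_pointwise}), is \emph{uniform} in $z$ and does not improve as $\abs z\to\infty$; consequently the resulting Kato-type operator norm bound is uniform rather than decaying. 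The decay has to be injected by a different mechanism. The paper does this by first \emph{symmetrizing}: it writes $V = vw$ with $v=\abs V^{1/2}$, $w=V/\abs V^{1/2}$, sets $r_z=G_z(vf_z)$, and solves $f_z + wG_z v f_z = -v$ \emph{in $L^2$}. The whole fixed-point argument then lives on $L^2(\R^3)$, avoiding the need to construct a Kato-adapted function space for $L_z^{-1}M_V$ to act on. This factorization is the key structural device you are missing.

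With the symmetrization in place, the $z\to\infty$ smallness of $\norm{wG_z v}_{L^2\to L^2}$ (Lemma~\ref{symmetrized2}) is obtained by splitting $V=V_1+V_2$ with $\norm{V_1}_{\K_3}$ small (via the approximation Lemma~\ref{lem:approx}) and $V_2\in L^\infty$: the $V_1$-terms are small uniformly in $z$ by the Stein-interpolated estimate $\norm{\,\abs V^{1/2}G_z\abs W^{1/2}\,}_{L^2\to L^2}\leq C\norm V_{\K_3}^{1/2}\norm W_{\K_3}^{1/2}$ (Lemma~\ref{main_lemma}), and the $V_2V_2$-term decays like $\abs z^{-1}$ by the Sylvester--Uhlmann weighted $L^2$ estimate (Lemma~\ref{SU_estimate}). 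So the ``decay'' comes from the \emph{bounded} part of $V$, not from any $z$-improvement of the Kato-type estimate. Your sketch collapses these two ingredients into one hypothesized decaying bound, which would need to be proved from scratch and, for the reasons above, is not expected at scale $\abs z^{-1}$ in a Kato norm. Finally, the $H^1(U)$ membership of $u_z$ is not just interior elliptic regularity applied to $-\Delta r_z = 2\ii z\cdot\nabla r_z - V(1+r_z)$ (note $V r_z$ is only $L^1$); the paper gives a dedicated computation using the Riesz-potential semigroup and Proposition~\ref{prop:general} (the Kato bound for $\I_2$) to control $\nabla u_z$ in $L^2$. You would need an argument of comparable specificity there as well.
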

Once Theorem \ref{CGO_theorem} has been proven, Theorem \ref{main_theorem} follows by a well-known argument.

The general strategy is as follows: A short calculation shows that the error term $r_z$ can be found by finding a $L^2-$small solution of the equation
\begin{equation}\label{eq:CGO_equation}
	-\euler^{-\ii z\cdot x}\Delta (\euler^{\ii z\cdot x}r_z(x)) + V(x)r_z(x) = - V(x)\,,
\end{equation}
where $z \in \C^3$ with $\abs{z} > s_0 \in \R$ such that $z \cdot z = 0$. As in the works \cite{sylvester1987global}, \cite{haberman2013uniqueness} and \cite{haberman2015uniqueness}, we obtain $r_z$ by a pertubation argument. For this purpose, it is useful to construct a suitable inverse of the conjugated Laplacian $-\euler^{-\ii z\cdot x}\Delta \euler^{\ii z\cdot x}$. This is a constant coefficient differential operator with symbol $\mathfrak{p}_z(\xi) = \abs{\xi}^2 + 2z\cdot \xi$. We may construct a right inverse $G_z$ to $\mathfrak{p}_z(D)$ by convolving with $\mathfrak{E}_z = \F^{-1}(\mathfrak{p}_z)^{-1}$ where $\F^{-1}$ denotes the inverse Fourier transform.

The key identity we need to prove is that for compactly supported $V,W \in \K_3$, we have
\begin{equation}\label{eq:intro_estimate}
	\lim_{\tau \to \infty} \| {\abs{V}^{1/2}}G_z\abs{W}^{1/2} \|_{L^2(U) \to L^2(U)} =0\,,
\end{equation}
as this will be used to show that the error term $r_z$ goes to $0$ as $z \to \infty$.

To this end, we observe that by a change of coordinates, it can be deduced from an estimate proven in \cite{shen2001absolutekato} there exists an absolute constant $A>0$ and a $s_0 > 0$ such that for all $z \in \C^3$ with $z\cdot z = 0$ we have
\begin{equation}\label{eq:pointwise_bound}
	\abs{\mathfrak{E}_z(x)} \leq A\abs{x}^{-1} \quad (x \in \R^3)\,.
\end{equation}

Equipped with this pointwise bound, we obtain that
\begin{equation*}
	\norm{\abs{V}G_z}_{L^1(\R^3) \to L^1(\R^3)} \leq C(V), \quad \norm{G_z\abs{V}}_{L^\infty(\R^3) \to L^\infty(\R^3)} \leq C(V),
\end{equation*}
with a constant depending only on $V$. This allows us to deduce \eqref{eq:intro_estimate} by applying the Stein interpolation theorem, an estimate from \cite{sylvester1987global} and the fact that compactly supported Kato class potentials can be approximated by test functions in a suitable norm (Lemma \ref{lem:approx}).

\subsection*{Summary of contents and remarks on notation}

In the preliminary section \ref{sec:kato} we define the Kato class $\K_n$ and give a proof of the aforementioned approximation property Lemma \ref{lem:approx}. In the following Section \ref{sec:CGO} we prove Theorem \ref{CGO_theorem}, following the strategy that we outlined in the preceding subsection. In  Section \ref{sec:DTN}, we construct the Dirichlet-to-Neumann operator associated to a potential $V \in \K_n$ via form methods. After this, we are in an excellent position to prove the main result, Theorem \ref{main_theorem}. The proof is carried out in Section \ref{sec:main}.

Moreover, this work contains three appendices. In Appendix \ref{sec:bessel} we recall some useful properties of the Riesz and Bessel potentials. In Appendix \ref{sec:point}, we give a proof of the pointwise estimate used in Section \ref{sec:CGO} with an explicit constant $A$. Appendix \ref{sec:extensions} discusses possible generalisations to higher dimensions.

Throughout this work, we adopt the convention that $C$ is an \emph{absolute constant} that is only allowed to depend on certain fixed parameters depending on the context. The exact value of $C$ may change from line to line. If we want to emphasize that $C$ does depend on a certain other quantity $x$, we write $C = C_x$. Conversely, if we want to emphasize that $C$ is independent of $x$, we write $C \neq C_x$.

By $n$ and $d$ we denote fixed natural numbers such that $n \geq 3$ and $d \geq 2$, denoting the dimension of the underlying Euclidean space. Although the main theorems are only stated and proven for $n = 3$, we have formulated preparatory lemmas and propositions in general dimension whenever feasible.

\section{The Kato class}\label{sec:kato}

Suppose that $d \geq 2$ and $0 < \lambda \leq d$. For $x \in \R^d$, we set
\begin{equation*}
	k_\lambda(x) = \begin{cases}
		1/\abs{x}^{d- \lambda}: \quad d \geq 3,  \\
		\abs{\log(1/\abs{x})}: \quad \lambda = d = 2
	\end{cases}\,.
\end{equation*}
Let $V \in L^1_\loc(\R^d)$. We say that $V$ is an element of the Kato class $\K_d$ if and only if
\begin{equation*}
	\lim_{r \to 0} \sup_{x \in \R^d} \int\limits_{\abs{x - y} < r} k_{2}(x-y)\abs{V(y)}\,\diff y = 0\,.
\end{equation*}
\begin{remark}\label{rem:ss}
	In this remark we discuss some generalizations of the Kato conditon.
	\begin{enumerate}[a)]
		\item One can view the class $\K_d$ as a special case of the Schechter-Stummel classes $M_{\alpha,p}$, originally definied in \cite{schechter1971spectra}, see also \cite[p. 453]{simon1982schrodinger}. For $0 < \alpha \leq d$, $1 \leq p < \infty,$ we have that $V \in M_{\alpha,p}$ if and only if
		\begin{equation*}
			\lim_{r \to 0} \sup_{x \in \R^d} \int\limits_{\abs{x - y} < r} k_{\alpha}(x-y)\abs{V(y)}^p\,\diff y = 0\,.
		\end{equation*}
		From Hölder's inequality, the inclusion
		\begin{equation*}
			M_{\alpha,p} \subseteq M_{\beta,q}
		\end{equation*}
		follows if $\alpha/p < \beta/q$ and $ d \geq 3$ (as remarked in \cite{kuwae2022p} this inequality is displayed with a sign error in \cite{simon1982schrodinger}). We note that $\K_d = M_{2,1}$.
		\item In \cite{zheng2009higher}, a definition of fractional order Kato classes $K_\alpha = M_{2\alpha,1}$ for $\alpha > 0$ was given and the characterisation of these potentials was applied to study higher order Schrödinger operators. 
		\item The Kato class $\K_d$ may be characterized by the fact that $V \in \K_d$ if and only if
		\begin{equation*}
			\lim_{t \to 0} \sup_{x \in \R^d} \int\limits_{\R^d} \bigg( \int\limits_0^t p_s(x,y)\,\diff s \bigg)\abs{V(y)}\,\diff y = 0\,,
		\end{equation*}
		with $p_t(x,y) = (\F^{-1}e^{-t\abs{\cdot}^2})(x-y)$ denoting the heat kernel.
		In \cite{kuwae2022p} the $L^p$-Kato class $\K^p_d$ is considered which consists of Borel measures on $\R^d$ characterised by
		\begin{equation*}
			\mu \in \K^p_d \iff \lim_{r \to 0} \sup_{x \in \R^d} \int_{\abs{x - y} < r} k_{2}(x-y)^p\,\mu(\diff y) = 0\,.
		\end{equation*}
		The authors of \cite{kuwae2022p} show that if $d - p(d-2) > 0$, then this condition is equivalent to
		\begin{equation*}
			\lim_{t \to 0} \sup_{x \in \R^d} \int\limits_{\R^d} \bigg( \int\limits_0^t p_s(x,y)\,\diff s \bigg)^p\,\mu(\diff y) = 0\,.
		\end{equation*}
		\item In \cite{stollmann1996perturbation}, the authors generalize the Kato condition to the setting of measure pertubations $\mu$ of abstract Dirichlet forms $\mathfrak{h}$. One of the principal results established in their work (\cite[Lemma 2.2]{stollmann1996perturbation}) is that the operator sequence $(m(H - m)^{-1}\mu)_{m \in \N}$ vaguely converges to $\mu$ where $H$ is the operator generated by the Dirichlet form $\mathfrak{h}$. The classical case is the one where $H = -\Delta$ and $\mu$ is absolutely continuous. In this special situation, Lemma \ref{lem:approx} gives a somewhat stronger approximation property.
	\end{enumerate}
\end{remark}

We denote by $\K_d^c$   the set of compactly supported elements of $\K_d$. For $V \in \K_d^c$, the global Kato norm
\begin{equation*}
	\norm{V}_{\K_d} = \sup_{x \in \R^d} \int\limits_{\R^d}k_{2}(x-y)\abs{V(y)}\,\diff y
\end{equation*}
is finite.

Let $n \geq 3$. Elements of $\K_n^c$ can be approximated by elements of the test function space $\D(\R^n)$ in this norm,  see \cite[Theorem 3.3]{zheng2009higher}. By a careful inspection of the arguments given in \cite[Theorem 3.1,Theorem 3.3]{zheng2009higher}, it is possible to deduce that we may choose the approximating test functions such that their global Kato norm is bounded by $\norm{V}_{\K_n}$. More precisely, we obtain:
\begin{lemma}\label{lem:approx}
	Suppose that $V \in \K_n^c$. Then, for each $\varepsilon  > 0$ there exists a $V_\flat\in \D(\R^n)$ such that
	\begin{equation}
		\norm{V_\flat}_{\K_n} \leq \norm{V}_{\K_n}, \quad \norm{V - V_\flat}_{\K_n} < \varepsilon\,.
	\end{equation}
\end{lemma}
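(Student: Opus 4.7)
The plan is to construct $V_\flat$ by standard mollification with a careful bookkeeping of the Kato norm. Let $\phi_\delta \in \D(\R^n)$ be a radial, non-negative mollifier of integral $1$ and support in $B(0,\delta)$, and set $V_\delta = \phi_\delta \ast V$. Because $V$ has compact support and lies in $L^1_\loc(\R^n)$, we have $V_\delta \in \D(\R^n)$ for all $\delta > 0$. I will show that (i) $\|V_\delta\|_{\K_n} \leq \|V\|_{\K_n}$ for every $\delta > 0$, and (ii) $\|V - V_\delta\|_{\K_n} \to 0$ as $\delta \to 0$; then the approximant $V_\flat := V_\delta$ for small enough $\delta$ does the job.

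For (i), the key observation is that the Newton-type kernel $k_2(w) = |w|^{-(n-2)}$ is superharmonic on $\R^n$ (with $n \geq 3$), so the spherical mean-value inequality for superharmonic functions, together with the radial symmetry and positivity of $\phi_\delta$, gives $\phi_\delta \ast k_2 \leq k_2$ pointwise on $\R^n$. Using $|V_\delta| \leq \phi_\delta \ast |V|$ and Fubini's theorem, I get
\[
\|V_\delta\|_{\K_n} \leq \sup_{x \in \R^n}\int_{\R^n}(\phi_\delta \ast k_2)(x-z)\,|V(z)|\,\diff z \leq \|V\|_{\K_n}.
\]

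For (ii), I fix $\varepsilon > 0$ and use the Kato condition on $V$ to choose $r > 0$ so small that $\sup_x \int_{B(x,2r)} k_2(x-y)|V(y)|\,\diff y < \varepsilon/3$. I then restrict attention to $\delta < r$ and split
\[
\int_{\R^n} k_2(x-y)|V(y) - V_\delta(y)|\,\diff y \leq \int_{B(x,r)}\!k_2(x-y)|V(y)|\,\diff y + \int_{B(x,r)}\!k_2(x-y)|V_\delta(y)|\,\diff y + r^{-(n-2)}\|V - V_\delta\|_{L^1(\R^n)}.
\]
The first summand is $< \varepsilon/3$ by the choice of $r$. The third summand tends to zero as $\delta \to 0$ because $V \in L^1(\R^n)$ (being compactly supported and locally integrable), and standard mollifier theory gives $V_\delta \to V$ in $L^1$. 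For the middle term, I again use $|V_\delta| \leq \phi_\delta \ast |V|$ and Fubini; the inner integral is supported in $B(x,r)\cap B(z,\delta)$, which is empty unless $z \in B(x, r+\delta) \subseteq B(x,2r)$, and the local mean-value argument again yields $\int_{B(x,r)} k_2(x-y)\phi_\delta(y-z)\,\diff y \leq k_2(x-z)$. Hence this term is bounded by $\sup_x\int_{B(x,2r)}k_2(x-z)|V(z)|\,\diff z < \varepsilon/3$.

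The main technical issue is the middle term: one must argue that the local Kato smallness of $V$ transfers to $V_\delta$ uniformly in $\delta$. This is precisely what the localized mean-value argument above accomplishes, and it is the one place where one has to be more careful than in the original Zheng--Yao proof, since there the analogous estimate is only needed globally.
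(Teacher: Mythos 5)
Your proof is correct, but it takes a genuinely different route from the paper's. The paper's argument hinges on \emph{translation invariance} of the Kato moduli: writing $\abs{V_\delta} \leq \phi_\delta * \abs{V}$ and applying Fubini, one gets a Minkowski-type inequality $\norm{V_\delta}_{\K_n} \leq \int \norm{V(\cdot - z)}_{\K_n}\phi_\delta(z)\,\diff z = \norm{V}_{\K_n}$, and the same trick bounds the truncated Kato modulus of $V_\delta$ at radius $r$ by that of $V$ at the \emph{same} radius $r$, without any condition on the mollifier beyond positivity and unit mass. Your argument instead exploits the \emph{superharmonicity} of the Newton kernel $k_2 = \abs{\cdot}^{2-n}$ to obtain the pointwise bound $\phi_\delta * k_2 \leq k_2$, which forces you to choose the mollifier radial and, in the local estimate, to enlarge the radius from $r$ to $2r$ to account for the support of $\phi_\delta$. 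Both arguments are sound; yours is slightly more geometric and makes the mechanism (the kernel's mean-value property) visible, while the paper's is slightly more robust (no radiality needed, cleaner bookkeeping of the radius). Note also that your observation at the end — that the local smallness must transfer to $V_\delta$ uniformly in $\delta$ — is precisely what the paper's translation-invariance step accomplishes in one line; the issue is real, and both proofs handle it correctly.
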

\begin{proof}
	For the reader's convenience, we give a self-contained proof. Since $\spt V$ is compact, it follows that $V \in L^1(\R^n)$.
	Let $\phi \in \D(\R^n)$ such that $\phi \geq 0$ and $\norm{\phi}_{L^1} = 1$. For each $\delta > 0$ we define $\phi_\delta = \delta^{-n}\phi(\delta^{-1}\cdot)$ and $V_\delta = V * \phi_\delta$. Clearly, it holds that $V_\delta \in \D(\R^n)$. Since $(\phi_\delta)_{(\delta > 0)}$ is an approximate unit in the convolution algebra $L^1(\R^n)$, we have $V_\delta \to V$ in $L^1(\R^n)$ as $\delta \to 0$.
	
	By Fubini's theorem, we have for each $x \in \R^n$ that
	\begin{align*}
		\int\limits_{\R^n} \frac{\abs{V_\delta(y)}\,\diff y}{\abs{x-y}^{n - 2}} &= \,\int\limits_{\R^n}  \bigg(\,\int\limits_{\R^n} V(y-z)\phi_\delta(z)\,\diff z \bigg) \frac{\,\diff y}{\abs{x -y}^{n-2}} \\ &= \int \limits_{\R^n} \bigg(\int\limits_{\R^n}\abs{V(y-z)}\frac{\,\diff y}{\abs{x -y}^{n-2}}\bigg) \phi_\delta(z)\,\diff z\,.
	\end{align*}
	
	Taking the supremum over all $x \in \R^d$, we obtain
	\begin{equation*}
		\norm{V_\delta}_{\K_n} \leq \int \limits_{\R^n} \norm{V(\cdot - z)}_{\K_n} \phi_\delta(z) \,\diff z = \norm{V}_{\K_n}\norm{\phi_\delta}_{L^1(\R^n)} = \norm{V}_{\K_n}\,.
	\end{equation*}
	By another application of Fubini's theorem, it follows that
	\begin{align*}
		\sup_{x \in \R^n}  \int\limits_{\abs{x -y} < r}  \frac{\abs{V_\delta(y)}\,\diff y}{\abs{x -y}^{n - 2}} &\leq \int\limits_{\R^n} \sup_{x \in \R^n}  \int\limits_{\abs{x -y} < r}  \frac{\abs{V(y - z)}\,\diff y}{\abs{x -y}^{n - 2}}  \phi_\delta(z)\,\diff z \\ &= \sup_{x \in \R^n}  \int\limits_{\abs{x -y} < r}  \frac{\abs{V(y)}\,\diff y}{\abs{x -y}^{n - 2}} 
	\end{align*}
	where we made the substitution $y -z \mapsto y$ in the second line and used the observation that
	\begin{equation*}
		\sup_{x \in \R^n}  \int\limits_{\abs{x -y - z} < r}  \frac{\abs{V(y)}\,\diff y}{\abs{x -y - z}^{n - 2}} = \sup_{x \in \R^n}\int\limits_{\abs{x -y} < r}  \frac{\abs{V(y)}\,\diff y}{\abs{x -y}^{n - 2}} \,.
	\end{equation*}
	It follows that
	\begin{align*}
		\sup_{x \in \R^n} \int\limits_{\R^n}\frac{\abs{(V - V_\delta)(y)}\,\diff y}{\abs{x -y}^{n - 2}} \leq& \sup_{x \in \R^n} \int\limits_{\abs{x -y} < r}  \frac{\abs{V(y)}\,\diff y}{\abs{x -y}^{n - 2}}  \\ &\qquad +  \int\limits_{\abs{x -y} < r}  \frac{\abs{V_\delta(y)}\,\diff y}{\abs{x -y}^{n - 2}} 
		+  \int\limits_{\abs{x -y} \geq r}  \frac{\abs{(V - V_\delta)(y)}\,\diff y}{\abs{x -y}^{n - 2}} \\
		\leq& 2\sup_{x \in \R^n}   \int\limits_{\abs{x -y} < r}  \frac{\abs{V(y)}\,\diff y}{\abs{x -y}^{n - 2}}  + r^{2-n}\norm{V - V_\delta}_{L^1}\,.
	\end{align*}
	Let $\varepsilon > 0$. We may choose $r$ such that
	\begin{equation*}
		\sup_{x \in \R^n} \int\limits_{\abs{x -y} < r}  \frac{\abs{V(y)}\,\diff y}{\abs{x -y}^{n - 2}} < \frac{\varepsilon}{4}
	\end{equation*}
	and $\delta > 0$ such that
	\begin{equation*}
		\norm{V - V_\delta}_{L^1(\R^n)} < \frac{\varepsilon}{2r^{2-n}}\,.
	\end{equation*}
	It follows that
	\begin{equation*}
		\norm{V - V_\delta}_{\K_n} < \varepsilon\,. \qedhere
	\end{equation*}	
\end{proof}

\begin{remark}
	It would be interesting to generalize the main result of this paper to arbitrary, not necessarily absolutely continuous Kato class measures (see Remark \ref{rem:ss}). For this purpose, it would be convenient to generalize Lemma \ref{lem:approx} to not necessarily absolutely continous Kato class measures.
	
	In this setting, we would define
	\begin{equation*}
		\norm{\mu}_{\K_n} = \sup_{x \in \R^d} \int\limits_{\R^d}k_{2}(x-y)\,\diff \abs{\mu}(y)
	\end{equation*}
	with $\abs{\mu}$ denoting the total variation measure. Consider, for example, the surface measure $\sigma_{n-1}$ of the unit sphere in $\R^n$. It is well know that $\sigma_{n-1} \in \K_n$. Denoting by $(\phi_{\delta})_{\delta > 0}$ a smooth, nonnegative approximate unit in $L^1(\R^n)$, we have that
	\begin{equation*}
		\abs{\phi_\delta * \sigma_{n-1} - \sigma_{n-1}} = \phi_\delta * \sigma_{n-1} + \sigma_{n-1}\,.
	\end{equation*}
	This shows that $\phi_\delta * \sigma_{n-1}$ does not converge to $\sigma_{n-1}$ in the total variation norm.
	
	Since an essential ingredient of the proof of Lemma \ref{lem:approx} was that $\phi_\delta * V \to V$ in $L^1(\R^n)$, it follows that the proof of Lemma \ref{lem:approx} does not straightforwardly generalize to the setting of Kato class measures. Indeed, it seems unlikely that a strong approximation property such as the one in Lemma \ref{lem:approx} holds for Kato class measures.
\end{remark}

\begin{example} \label{example}
	We consider a compactly supported, nonnegative potential $V:\R^3 \to \R$ such that $V(x) = \abs{x'}^{-2}\abs{\log(x')}^{-\delta}$ for $\abs{x} \leq 1/2$ where $x' = (x_2,x_3)$ and $\delta > 2$. This is an example of a function such that $V \in \K_3$ but $V \notin W^{-1,p}(\R^3)$ for any $p > 2$. In particular $V \notin{W^{-1,3}(\R^3)}$ which is the critical Sobolev space considered in \cite{haberman2015uniqueness, ham2019uniqueness}. Indeed, from \cite[p.455f., Example A]{simon1982schrodinger} we obtain that if $d < n$, $T:\R^n \to \R^d$ linear and $V \circ T \in \K_d$, then $V \in \K_n$. Thus, it suffices to show that $\tilde V: \R^2 \to \R$ given by $\tilde V(z) = V(0,z)$ is an element of $\K_2$, which follows from the fact that
	\begin{align*}
		\sup_{z \in \R^2} \int\limits_{\abs{z-w} \leq r} \tilde V(z)\abs{\log(1/\abs{z-w})}\,\diff z &\leq \int_{\abs{w} \leq r} \abs{w}^{-2}\abs{\log(1/\abs{w})}^{1-\delta}\,\diff w \\
		&\leq C\int\limits_{0}^r t^{-1}\abs{\log(1/t)}^{1-\delta}\,\diff t \\ &\leq C\abs{\log(1/r)}^{2-\delta} \to 0 \quad (r \to 0)\,.
	\end{align*}
	
	Let $p^\prime$ be such that $1/p + 1/p^\prime = 1$. To see that $V \notin W^{-1,p}(\R^3)$, we interpret $V$ as an element of $\D^\prime(\R^3)$ given by
	\begin{equation*}
		\dual{V,\phi} = \int\limits_{\R^3} V(x)\phi(x)\,\diff x, \quad (\phi \in \D(\R^3))\,,
	\end{equation*}
	and remark that since $W^{-1,p}(\R^3) =( W^{1,p^\prime}_0(\R^3))^\prime$ where \[W^{1,p^\prime}_0(\R^3) = \overline{\D(\R^3)}^{\norm{\cdot}_{W^{1,p^\prime}(\R^3)}}\,, \] the norm on $W^{-1,p}(\R^3)$ is given by
	\begin{equation*}
		\norm{V}_{W^{-1,p}(\R^3)} = \sup_{\phi \in \D(\R^3)} \frac{\abs{\dual{V,\phi}}}{\norm{\phi}_{W^{1,p^\prime}(\R^3)}}\,.
	\end{equation*}
	Thus, we show that for each $N \in \N$ there exists a function $\phi \in \D(\R^3)$  such that 
	\begin{equation}\label{eq:ex}
		\abs{\dual{V,\phi}} \geq N\norm{\phi}_{W^{1,p^\prime}}\,.
	\end{equation}
	To this end, let $\varepsilon > 0$, $\eta \in \D(\R)$ such that $\eta = 1$ on $(-1,1)$ and $\psi \in \D(\R^2)$ such that $\psi = 1$ on $B(1)$. For $x \in \R^3$ we define $\phi_\varepsilon(x) = \eta(x_1)\psi(\varepsilon^{-1} x')$. It is clear that
	\begin{align*}
		\norm{\phi_\varepsilon}_{W^{1,p^\prime}} &= \norm{\phi_\varepsilon}_{L^{p'}(\R^3)} + \norm{\nabla \phi_\varepsilon}_{L^{p^\prime}(\R^3;\C^3)} \\ &\approx \varepsilon^{2/p^\prime}\big(\norm{\phi_1}_{L^{p'}(\R^3)} + \norm{\partial_{x_1}\phi_1}_{L^{p'}(\R^3)}\big) \\ &\quad + \varepsilon^{2/p^\prime - 1}\big(\norm{\partial_{x_2}\phi_1}_{L^{p'}(\R^3)} + \norm{\partial_{x_3}\phi_1}_{L^{p'}(\R^3)}\big)
	\end{align*}
	and thus
	\begin{equation}\label{eq:ex1}
		\norm{\phi_\varepsilon}_{W^{1,p^\prime}} \approx (1+\varepsilon^{-1})\varepsilon^{2/p^\prime}\,.
	\end{equation}
	On the other hand, by applying polar coordinates, we obtain that
	\begin{equation}\label{eq:ex2}
		\abs{\dual{V,\phi_\varepsilon}} \geq C\int_{\abs{x'} \leq \varepsilon} \abs{x'}^{-2}\abs{\log(1/\abs{x'})}^{-\delta} \geq C\abs{\log(1/\varepsilon)}^{1-\delta}\,.
	\end{equation}
	We claim that for any given $N \in \N$, we can find $\varepsilon > 0$ such that
	\begin{equation}\label{eq:log}
		\abs{\log(1/\varepsilon)}^{1-\delta} \geq N (1+\varepsilon^{-1})\varepsilon^{2/p^\prime}\,.
	\end{equation}
	Indeed, setting $\varepsilon = \euler^{-t}$ where $t > 0$, the above inequality is equivalent to
	\begin{equation*}
		t^{1-\delta} \geq N(1+\euler^t)\euler^{-2t/p^\prime}
	\end{equation*}
	which is the same as
	\begin{equation}\label{eq:compare}
		\euler^{2t/p^\prime}t^{1-\delta} \geq N(1+\euler^t).
	\end{equation}
	Since $1/p < 1/2$ implies that $2/{p^\prime} > 1$, it follows that the left hand side of \eqref{eq:compare} grows faster than the right hand side as $t \to \infty$ which implies the claim \eqref{eq:log}. It now follows from \eqref{eq:ex1},  \eqref{eq:ex2} and \eqref{eq:log} that \eqref{eq:ex} holds with $\varphi = \varphi_\varepsilon$.
\end{example}

\section{Construction of CGO solutions}\label{sec:CGO}
In this section, we use the pointwise estimates derived in the previous section to deduce the necessary $L^2$-estimates required for the proof of Theorem \ref{CGO_theorem}.

Let $U \subseteq \R^n$ be bounded and open and $z \in \C^n$ such that $z \cdot z = 0$, the $\cdot$ denoting the real inner product. We define
\[
\e_z:\R^n \to \R, \quad x \mapsto \euler^{\ii z \cdot x}
\]
and set $D = -i\nabla$. Recall that $D\e_z = z\e_z$ and $D \cdot D = -\Delta$.
In order to prove Theorem \ref{CGO_theorem}, we need to construct $u_z, r_z \in H^1(U)$ such that
\[
-\Delta u_z + V u_z = 0
\]
and
\[
u_z = \e_z(1 + r_z)
\]
hold. Inserting the second equation \eqref{CGO_form} into equation \eqref{CGO_schroedinger}, we obtain the equation
\begin{equation*}
	(-\Delta + 2z \cdot D + V)r_z = - V.
\end{equation*}
To see this, we note that
\[
-\Delta (\e_z f) = -(\Delta \e_z)f + 2D\e_z\cdot Df - \e_z \Delta f = \e_z(-\Delta + 2z\cdot D)f\,.
\]

We consider the differential operator \[ \mathfrak{p}_z(D) = -\Delta + 2z\cdot D\,, \]
with symbol given by the polynomial
\[
\mathfrak{p}_z(\xi) = -\abs{\xi}^2 + z\cdot \xi, \quad (\xi \in \R^n)\,.
\]
A right inverse  \[ G_z:\mathcal{E}^\prime(\R^n) \to \mathcal{S}'(\R^n) \] to $\mathfrak{p}_z(D)$ is given by  \[ G_z u = \mathcal{F}^{-1}\mathfrak{p}_z^{-1}\mathcal{F}u =  \mathfrak{E}_z * u \] where $\mathfrak{E}_z = \mathcal{F}^{-1}(\mathfrak{p}_z^{-1})$. Here, $\mathcal{E}^\prime(\R^n)$ denotes the space of compactly supported distributions whereas $\mathcal{S}'(\R^n)$ denotes the space of tempered distributions. The key property of $\mathfrak{E}_z$, which we will employ to construct CGO solutions, is the following pointwise estimate
\begin{lemma}\label{lem:z_pointwise}
	There exists a constant $A > 0$ such that for all $z \in \C^3$ with $z \cdot z = 0$ it holds that
	\begin{equation}\label{eq:3d_pointwise_ez}
		\abs{\mathfrak{E}_z(x)} \leq \frac{A}{\abs{x}}\,, \quad (x \neq 0)\,.
	\end{equation}
\end{lemma}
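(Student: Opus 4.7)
The plan is to reduce the estimate, via an orthogonal change of coordinates and a scaling, to a single normalized case of $z$, and then invoke the pointwise computation from \cite{shen2001absolutekato}, whose explicit form is recorded in Appendix \ref{sec:point}.

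First I would write $z = a + \ii b$ with $a,b \in \R^3$. The condition $z\cdot z = 0$ gives $\abs{a} = \abs{b}$ and $a \cdot b = 0$; let $s = \abs{a} = \abs{b}$. The case $s = 0$ reduces to the standard Newton kernel, for which the bound is classical. Otherwise, pick $Q \in O(3)$ sending $a/s$ to $e_1$ and $b/s$ to $e_2$, and set $\zeta = e_1 + \ii e_2$, so that $Qz = s\zeta$. Since $\mathfrak{p}_{Qz}(\xi) = \mathfrak{p}_z(Q^\trans \xi)$ and the Fourier transform intertwines the $O(3)$-action, one obtains $\mathfrak{E}_z(y) = \mathfrak{E}_{s\zeta}(Qy)$, and since $\abs{Qy} = \abs{y}$ this reduces matters to proving the estimate for $z = s\zeta$.

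Next, I would remove the parameter $s$ by scaling. Using $\mathfrak{p}_{s\zeta}(s\eta) = s^2 \mathfrak{p}_\zeta(\eta)$ together with the dilation rule for the inverse Fourier transform, one obtains
\begin{equation*}
\mathfrak{E}_{s\zeta}(x) = s^{n-2}\,\mathfrak{E}_\zeta(sx).
\end{equation*}
In dimension $n = 3$, a bound of the form $\abs{\mathfrak{E}_\zeta(y)} \leq A/\abs{y}$ yields $\abs{\mathfrak{E}_{s\zeta}(x)} \leq A/\abs{x}$ uniformly in $s > 0$. The cancellation is tight: the factor $s^{n-2}$ absorbs the $1/\abs{sx}$ only for $n = 3$, which already pinpoints the dimension as critical for this approach.

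It then remains to prove the pointwise bound $\abs{\mathfrak{E}_\zeta(y)} \leq A/\abs{y}$ for the single fixed vector $\zeta = e_1 + \ii e_2$. This is the main obstacle, and I would defer the detailed computation to Appendix \ref{sec:point}. The idea is to work in cylindrical coordinates adapted to the $(e_1,e_2)$-plane: the zero set of $\mathfrak{p}_\zeta$ is a real codimension-two submanifold, so $\mathfrak{E}_\zeta$ must be interpreted as a distributional limit; integrating out $\xi_1,\xi_2$ by contour deformation and handling the residue contribution from the singular set, one is left with a one-dimensional integral in $\xi_3$ that can be evaluated in closed form via standard Bessel-function identities. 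This yields an explicit representation of $\mathfrak{E}_\zeta$ from which the $1/\abs{y}$ decay, with a tractable constant $A$, can be read off.
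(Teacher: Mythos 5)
Your reduction to a single normalized kernel is correct and matches the paper's first step: an orthogonal change of coordinates normalizes $z$, and the scaling identity $\mathfrak{E}_{s\zeta}(x) = s^{n-2}\mathfrak{E}_\zeta(sx)$ removes the remaining parameter, with the cancellation $s^{n-2}\cdot|sx|^{-1} = |x|^{-1}$ exact precisely for $n=3$, as you correctly observe. The paper, however, pushes the normalization one step further: instead of stopping at $\zeta = e_1 + \ii e_2$, it also shifts the frequency variable by the real part of $z$, which in physical space is multiplication by a unimodular character $\e_v$ (harmless for the modulus), and thereby lands on the symbol $p_\tau(\xi) = (\xi_1 + \ii\tau)^2 + |\xi'|^2$ with a \emph{single} complexified direction. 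That particular normal form is what makes Appendix \ref{sec:point} tractable: the inverse Fourier transform of $p_\tau^{-1}$ in the $n-1$ perpendicular variables $\xi'$ is recognized as the Bessel potential $F_1\bigl(x',(\xi_1+\ii\tau)^2\bigr)$, which for $n=3$ equals $\tfrac{1}{2\pi}K_0$, and what is left is a single one-dimensional contour integral in $\xi_1$ evaluated by residues. Your sketch of the appendix reverses this order — integrate out the two complexified variables $\xi_1,\xi_2$ by contour deformation, then apply Bessel identities to the remaining $\xi_3$ integral — which is not what the paper does and would be considerably more awkward, since the singular circle lies in the $(\xi_1,\xi_3)$-plane and a simultaneous two-variable contour deformation around it has no clean residue description. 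Also a small correction: $\mathfrak{p}_\zeta^{-1}$ is already in $L^1_{\loc}$ because its zero set has real codimension two (this is the content of the flattening argument in the proof of Proposition \ref{prop_fundamentalsolution_fourier}), so $\mathfrak{E}_\zeta$ is an honest tempered distribution and no regularizing limit is needed to define it.
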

We remark that by letting $z \to 0$, one obtains that $A \geq 1/(4\pi)$. The estimate \eqref{eq:3d_pointwise_ez} can be derived from \cite[Lemma 3.16]{shen2001absolutekato} by a change of coordinates. By employing a different method of proof than the one given in \cite{shen2001absolutekato}, it is possible to show that one may chose $A \leq 3\sqrt{2}/(4\pi)$. This is carried out in detail in Appendix \ref{sec:point}\,.

Furthermore, we require the following corollary of the classical  Sylvester-Uhlmann estimate \cite[Proposition 2.1]{sylvester1987global}.

\begin{lemma}\label{SU_estimate}
	There is a $C \neq C_z$ and a $s_0 > 0$ such that for all $u \in L^2(U)$ and $z \in \C^n$ with $\abs{z} > s_0$ we have
	\[ \norm{G_z u}_{L^2(U)} \leq C \abs{z}^{-1}\norm{u}_{L^2(U)}\,.
	\] 
\end{lemma}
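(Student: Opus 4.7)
The plan is to derive Lemma \ref{SU_estimate} directly from the classical weighted $L^2$-estimate of Sylvester and Uhlmann (\cite[Proposition 2.1]{sylvester1987global}), used as a black box, by exploiting that $U$ is bounded in order to trade the weighted norms for plain $L^2(U)$ norms. Recall that the Sylvester--Uhlmann estimate asserts the existence, for each fixed $\delta \in (-1,0)$, of constants $C_\delta, s_0 > 0$ such that for all $z \in \C^n$ with $z\cdot z = 0$ and $\abs{z} > s_0$,
\begin{equation*}
	\norm{G_z f}_{L^2_\delta(\R^n)} \leq C_\delta \abs{z}^{-1}\norm{f}_{L^2_{\delta+1}(\R^n)}, \qquad \norm{v}_{L^2_s}^2 = \int_{\R^n} (1+\abs{x}^2)^s\abs{v(x)}^2 \drm x\,.
\end{equation*}

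The passage to the bounded domain estimate is a matter of comparing norms on a ball. Fix $\delta \in (-1,0)$ (say $\delta = -1/2$) and pick $R > 0$ with $U \subseteq \ballc{R}$. For $u \in L^2(U)$, extend by zero to $\R^n$ and observe that, since $\delta + 1 \in (0,1)$, the weight $(1+\abs{x}^2)^{\delta+1}$ is bounded by $(1+R^2)^{\delta+1}$ on $\ballc{R}$, which gives
\begin{equation*}
	\norm{u}_{L^2_{\delta+1}(\R^n)} \leq (1+R^2)^{(\delta+1)/2}\norm{u}_{L^2(U)}\,.
\end{equation*}
Symmetrically, since $\delta < 0$, the weight $(1+\abs{x}^2)^{\delta}$ is bounded below by $(1+R^2)^{\delta}$ on $U$, so that
\begin{equation*}
	\norm{G_z u}_{L^2(U)} \leq (1+R^2)^{-\delta/2}\norm{G_z u}_{L^2_\delta(\R^n)}\,.
\end{equation*}
Chaining the three inequalities above yields
\begin{equation*}
	\norm{G_z u}_{L^2(U)} \leq C_\delta(1+R^2)^{1/2} \abs{z}^{-1}\norm{u}_{L^2(U)}\,,
\end{equation*}
with a constant independent of $z$, which is the claim.

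There is no real obstacle in this step: the heavy analytic work is packaged in the Sylvester--Uhlmann weighted estimate, whose proof uses a shift-of-contour/Plancherel argument for the Fourier multiplier $\mathfrak{p}_z^{-1}(\xi)$ along the coisotropic sphere where $\mathfrak{p}_z$ vanishes and is not reproduced here. Everything else reduces to the elementary bounds $c \leq (1+\abs{x}^2)^s \leq C$ on the ball $\ballc{R}$, whose constants depend on $R$ and $\delta$ alone.
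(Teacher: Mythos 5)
Your proof is correct and follows essentially the same route as the paper's: both cite the Sylvester--Uhlmann weighted $L^2$ estimate as a black box and pass to the unweighted bound by comparing the weights $(1+\abs{x}^2)^{\delta}$ and $(1+\abs{x}^2)^{\delta+1}$ to constants on a ball $\ballc{R}\supseteq U$ after extending $u$ by zero. The only difference is that you make the resulting constant $C_\delta(1+R^2)^{1/2}$ explicit, whereas the paper abbreviates it as $C_{\delta,R}$.
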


\begin{proof}
	Since $U$ is bounded, there is an $R > 0$ such that $U$ is contained in some closed ball $B(R) = \{x \in \R^n: \abs{x} \leq R\}$. Suppose that $-1 < \delta < 1$ and define the weighted space
	\[
	L^2_\delta = L^2(\R^n, ( 1+\abs{x}^2 ) ^{\delta/2}\,\diff x )\,,
	\]
	with the norm defined by
	\[
	\norm{f}^2_{L^2_\delta} = \int\limits_{\R^n} \abs{f(x)}^2( 1+\abs{x}^2 ) ^{\delta/2}\,\diff x, \quad f \in L^2_\delta\,.
	\]
	In \cite[Proposition 2.1]{sylvester1987global} it is proven that for all $z$ fulfilling the above assumptions and $u \in L^2_{\delta +1}$  and all $-1 < \delta < 0$, we have
	\[
	\norm{G_z u}_{L^2_\delta} \leq C_\delta \abs{z}^{-1} \norm{u}_{L^2_{\delta+1}}\,.
	\]
	Every $u \in L^2(U)$ may be extended to an element of $L^2(\R^n)$ by setting it to $0$ outside of $u$. Denoting this extension again by $u$, we  deduce that there exists a constant $C_{\delta,R}$ such that for all $u \in L^2(U)$ we have
	\begin{align*}
		\norm{G_zu}_{L^2(U)} &\leq \norm{G_zu}_{L^2(B(R))} \\ &\leq C_{\delta,R}\norm{G_z u}_{L^2_\delta} \\ &\leq C_{\delta,R} \abs{z}^{-1} \norm{u}_{L^2_{\delta+1}} \\ &\leq C_{\delta,R}\abs{z}^{-1}\norm{u}_{L^2(U)}\,. \qedhere
	\end{align*}
\end{proof}
The following lemma is one of the key results that will enable the construction of CGO solutions. Here we employ the pointwise estimate \eqref{eq:3d_pointwise_ez}.
\begin{lemma}\label{main_lemma}
	Let $V,W \in \K_3^c$. There exists a $C > 0$ such that
	\begin{equation}\label{eq:interpolation}
		\norm{\abs{V}^{1/2}G_z\abs{W}^{1/2}f}_{L^2(\R^3)} \leq C\norm{V}^{1/2}_{\K_3}\norm{W}^{1/2}_{\K_3}\norm{f}_{L^2(\R^3)}, \quad (z \in \C^3, f \in L^2(\R^3))\,.
	\end{equation}
\end{lemma}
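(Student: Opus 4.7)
The target operator $T := \abs{V}^{1/2}G_z\abs{W}^{1/2}$ will be realized as the midpoint of an analytic family of operators and estimated by Stein complex interpolation between an $L^1 \to L^1$ and an $L^\infty \to L^\infty$ endpoint. Both endpoint bounds follow at once from the pointwise estimate $\abs{\mathfrak{E}_z(x)} \leq A\abs{x}^{-1}$ provided by Lemma \ref{lem:z_pointwise}, and are uniform in $z$.

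\textbf{Step 1 (endpoint bounds).} For any compactly supported $V \in \K_3$, Lemma \ref{lem:z_pointwise} combined with Fubini's theorem yields
\begin{equation*}
    \norm{VG_z f}_{L^1(\R^3)} \leq A\int_{\R^3}\abs{f(y)}\Big(\sup_{y}\int\frac{\abs{V(x)}}{\abs{x-y}}\drm x\Big)\drm y \leq A\norm{V}_{\K_3}\norm{f}_{L^1(\R^3)},
\end{equation*}
and the dual computation gives $\norm{G_z W f}_{L^\infty(\R^3)} \leq A\norm{W}_{\K_3}\norm{f}_{L^\infty(\R^3)}$. Both constants are independent of $z$.

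\textbf{Step 2 (analytic family).} For $s$ in the closed strip $\overline{S} = \{s \in \C : 0\leq \real s\leq 1\}$ set
\begin{equation*}
    T_s f := \abs{V}^s G_z \abs{W}^{1-s} f,
\end{equation*}
interpreting $\abs{V}^s, \abs{W}^{1-s}$ as zero where $V$, respectively $W$, vanish. Then $T_{1/2} = \abs{V}^{1/2} G_z \abs{W}^{1/2}$. On $\real s=0$ the multipliers $\abs{V}^{i\tau}$ have modulus $\leq 1$ while $\abs{W}^{1-i\tau}$ has modulus $\abs{W}$, so by Step 1
\begin{equation*}
    \norm{T_{i\tau}}_{L^\infty\to L^\infty}\leq A\norm{W}_{\K_3},\qquad \norm{T_{1+i\tau}}_{L^1\to L^1}\leq A\norm{V}_{\K_3},
\end{equation*}
uniformly in $\tau \in \R$.

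\textbf{Step 3 (interpolation).} It suffices to take $f$, $g$ bounded with compact support and show that $s\mapsto \scalarproduct{T_s f}{g}$ is holomorphic on the open strip, continuous on $\overline{S}$, and of admissible growth (here the uniform bounds from Step 2 make this trivial). Granting this, Stein's complex interpolation theorem at $\theta = 1/2$ gives
\begin{equation*}
    \norm{T_{1/2}}_{L^2 \to L^2} \leq \bigl(A\norm{W}_{\K_3}\bigr)^{1/2}\bigl(A\norm{V}_{\K_3}\bigr)^{1/2} = A\norm{V}_{\K_3}^{1/2}\norm{W}_{\K_3}^{1/2},
\end{equation*}
which is the desired estimate \eqref{eq:interpolation}.

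\textbf{Expected obstacle.} The only non-routine point is the admissibility of $\{T_s\}$: one must check that $s\mapsto \abs{V(x)}^s$ and $s\mapsto \abs{W(y)}^{1-s}$ give an entry that is analytic after integration. The pointwise domination $|\abs{V}^s|\leq 1+\abs{V}$ on $\overline{S}$, together with $V,W \in L^1(\R^3)$ (consequence of the compact support), makes differentiation under the integral legitimate by dominated convergence (the logarithmic derivatives are integrable against the kernel $A/\abs{x-y}$ on compact sets). Should this still cause trouble, an equally clean alternative is to first prove \eqref{eq:interpolation} for strictly positive $V_\flat, W_\flat \in \D(\R^3)$ — where the analytic family is plainly well-defined — and then invoke Lemma \ref{lem:approx} to pass to general $V,W \in \K_3^c$ by approximation in the Kato norm, using that the right-hand side of \eqref{eq:interpolation} depends continuously on $\norm{V}_{\K_3}$ and $\norm{W}_{\K_3}$.
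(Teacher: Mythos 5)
Your proof is correct and follows essentially the same route as the paper: both use the pointwise bound $\abs{\mathfrak{E}_z(x)}\leq A\abs{x}^{-1}$ to get $z$-uniform $L^1\to L^1$ and $L^\infty\to L^\infty$ endpoint bounds for the analytic family $T_s = \abs{V}^s G_z \abs{W}^{1-s}$ and then apply Stein interpolation at $s=1/2$. (Incidentally, your bookkeeping on the $L^\infty$ endpoint is the correct one — it should carry $\norm{W}_{\K_3}$ — whereas the paper's display contains a typo writing $\norm{V}_{\K_3}$ there; the paper also only sketches the admissibility check by reference to Stein's original paper, where you spell out the standard domination argument.)
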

\begin{proof}
	Without loss of generality, we assume that $V,W \geq 0$. Let $S = \{\alpha \in \C: 0 < \real(\alpha) < 1\}$. We consider the operators $(T_z^\alpha)_{\alpha \in \overline S}$ defined on measurable step functions $f$ by
	\begin{equation*}
		T_z^\alpha f = V^{\alpha}\mathfrak{E}_z*(W^{1 - \alpha}f)\,.
	\end{equation*}
	Since $W^{\alpha}f$ is compactly supported and $\mathfrak{E}_z \in L^1_\loc(\R^3)$, it follows that $\mathfrak{E}_z *(W^{\alpha}f)$ is measurable and thus we have that $T_z^\alpha f$ is measurable as well. Therefore, $T_z^\alpha$ maps measurable step functions to measurable functions. Let us write $\alpha = t + \ii s$ with $0 \leq t \leq 1$ and $s \in \R$. It follows from \eqref{eq:3d_pointwise_ez} that
	\begin{equation}\label{eq:L1}
		\norm{T_z^{1 + \ii s} f}_{L^1(\R^3)} \leq A\int\limits_{\R^3} \frac{\abs{V(y)}}{\abs{y - x}}\abs{f(x)}\,\diff y\,\diff x \leq A \norm{V}_{\K_3} \norm{f}_{L^1(\R^3)}\,,
	\end{equation}
	and
	\begin{equation}\label{eq:Linfty}
		\norm{T_z^{\ii s} f}_{L^\infty(\R^3)} \leq A \esssup_{x \in \R^3}  \int\limits \frac{\abs{V(y)}}{\abs{y - x}}\,\diff y \leq A \norm{V}_{\K_3} \norm{f}_{L^\infty(\R^3)}\,.
	\end{equation}
	
	It is well-known that the Stein interpolation theorem can be employed to deduce \eqref{eq:interpolation} from \eqref{eq:L1} and \eqref{eq:Linfty}, see \cite[Theorem 2]{stein1956interpolation} for a detailed argument.
\end{proof}
Let $n \geq 3$, denote by $\Sigma$ the space of measurable step functions and by $L^0$ the space of measurable functions $\R^n \to \C$. The interpolation argument which we employed here remains valid if we replace the operator $G_z$ by an arbitrary integral operator $T:\Sigma \to L^0$ such that $Tf = k * f$ for some $k \in L^1_\loc(\R^n)$ obeying the inequality $\abs{k(x)} \leq A\abs{x}^{2 - n}$ for some constant $A$. 
Examples are given by the operators $((I - \Delta)^{-1})$ (given by convolution with the Bessel potential $(F_1(\cdot,1))$ or $\I_2 = (-\Delta)^{-1}$ (given by convolution with the Riesz potential $I_2$). We obtain the following proposition:
\begin{proposition}\label{prop:general}
	Let $V,W \in \K_n^c$. and $T:\Sigma \to L^0$ as above. It holds that
	\begin{equation*}
		\norm{\abs{V}^{1/2}T\abs{W}^{1/2}f}_{L^2(\R^n)} \leq C\norm{V}^{1/2}_{\K_n}\norm{W}^{1/2}_{\K_n}\norm{f}_{L^2(\R^n)}, \quad (f \in L^2(\R^n))\,.
	\end{equation*}
\end{proposition}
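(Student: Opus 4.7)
The proof will be a direct transcription of the proof of Lemma \ref{main_lemma}, with $G_z$ replaced by $T$, the dimension $3$ replaced by $n$, and the pointwise bound $|\mathfrak{E}_z(x)| \leq A|x|^{-1}$ replaced by the hypothesized bound $|k(x)| \leq A|x|^{2-n}$. Since the Kato norm in dimension $n \geq 3$ is defined using the kernel $k_2(x) = |x|^{2-n}$, the argument carries over verbatim.

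The plan is the following. First, without loss of generality assume $V, W \geq 0$. For $\alpha$ in the closed strip $\overline{S} = \{\alpha \in \C : 0 \leq \real(\alpha) \leq 1\}$, define an analytic family of operators on measurable step functions by
\begin{equation*}
	T^\alpha f = V^\alpha \, \bigl(k * (W^{1-\alpha} f)\bigr) \,.
\end{equation*}
Since $W^{1-\alpha} f$ has compact support and $k \in L^1_\loc(\R^n)$, the convolution is well-defined and measurable, so $T^\alpha f$ is measurable. On the vertical line $\real(\alpha) = 1$, writing $\alpha = 1 + \ii s$ we have $|V^\alpha| = V$ and $|W^{1-\alpha}| = 1$, so by Fubini and the definition of $\norm{V}_{\K_n}$,
\begin{equation*}
	\norm{T^{1+\ii s} f}_{L^1(\R^n)} \leq A \int\int \frac{V(x)\,|f(y)|}{|x-y|^{n-2}} \, \diff x \, \diff y \leq A \norm{V}_{\K_n} \norm{f}_{L^1(\R^n)} \,.
\end{equation*}
Symmetrically, on the line $\real(\alpha) = 0$, writing $\alpha = \ii s$ we have $|V^\alpha| = 1$ and $|W^{1-\alpha}| = W$, which yields
\begin{equation*}
	\norm{T^{\ii s} f}_{L^\infty(\R^n)} \leq A \esssup_{x \in \R^n} \int \frac{W(y)}{|x-y|^{n-2}} \, \diff y \cdot \norm{f}_{L^\infty(\R^n)} \leq A \norm{W}_{\K_n} \norm{f}_{L^\infty(\R^n)} \,.
\end{equation*}

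Finally, one applies the Stein interpolation theorem at $\alpha = 1/2$, exactly as in the proof of Lemma \ref{main_lemma}. Since $T^{1/2} f = V^{1/2} T (W^{1/2} f)$, the interpolated bound reads
\begin{equation*}
	\norm{\abs{V}^{1/2} T \abs{W}^{1/2} f}_{L^2(\R^n)} \leq C \norm{V}_{\K_n}^{1/2} \norm{W}_{\K_n}^{1/2} \norm{f}_{L^2(\R^n)}
\end{equation*}
for a constant $C$ absorbing $A$ and the absolute constants produced by interpolation. I do not expect a genuine obstacle here: the only verification needed is that the family $(T^\alpha)$ has admissible growth and is analytic in the interior of the strip when tested against step functions, which is routine because $V^\alpha$ and $W^{1-\alpha}$ are simple pointwise exponentials in $\alpha$ and $k * (\,\cdot\,)$ is $\alpha$-independent. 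Thus the proof is complete with no essentially new ingredient beyond those already used for $G_z$.
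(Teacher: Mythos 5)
Your proof is correct and is exactly the transcription the paper intends: the paper presents Proposition~\ref{prop:general} as an immediate consequence of the interpolation argument in Lemma~\ref{main_lemma}, only noting that the pointwise bound $\abs{k(x)} \le A\abs{x}^{2-n}$ is all that is used. You also silently correct a small typo in the paper's displayed $L^\infty$ bound in Lemma~\ref{main_lemma}, which shows $\norm{V}_{\K_3}$ but should show $\norm{W}_{\K_3}$ as you have it — otherwise the interpolated estimate $C\norm{V}_{\K_3}^{1/2}\norm{W}_{\K_3}^{1/2}$ would not follow.
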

\begin{lemma}\label{symmetrized2}
	Let $V,W \in \K_3$ such that $\spt V \subseteq U$, $\spt W \subseteq U$, where $U \subseteq \R^3$ is open and bounded.  For all \( \delta > 0  \) there exists a \(s_0 > 0\) depending only on $V,W,\delta$ such that for all $z \in \C^3$ satisfying $\abs{z} \geq s_0$ and $z \cdot z = 0$, we have \[ \norm{\abs{V}^{1/2}G_z\abs{W}^{1/2}}_{L^2(\R^3) \to L^2(\R^3)} \leq \delta\,. \] 
\end{lemma}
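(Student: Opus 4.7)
The plan is an approximation argument: I would split $\abs{V}^{1/2} G_z \abs{W}^{1/2}$ via smooth Kato-norm mollifications of $\abs{V}$ and $\abs{W}$, handling the smooth piece with the $\abs{z}^{-1}$ decay of Lemma~\ref{SU_estimate} and the two rough cross-terms with the $z$-uniform bound of Lemma~\ref{main_lemma}.

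First, given an auxiliary parameter $\eta > 0$ to be chosen, I apply Lemma~\ref{lem:approx} to the nonnegative compactly supported functions $\abs{V},\abs{W} \in \K_3^c$. Because the approximation in that lemma is realized by convolution with a nonnegative mollifier, the resulting $V_\flat, W_\flat \in \D(\R^3)$ are themselves nonnegative and satisfy $\norm{V_\flat}_{\K_3} \leq \norm{\abs{V}}_{\K_3}$, $\norm{W_\flat}_{\K_3} \leq \norm{\abs{W}}_{\K_3}$, together with $\norm{\abs{V} - V_\flat}_{\K_3}, \norm{\abs{W} - W_\flat}_{\K_3} < \eta$. I then decompose
\begin{equation*}
\abs{V}^{1/2} G_z \abs{W}^{1/2} = V_\flat^{1/2} G_z W_\flat^{1/2} + \bigl(\abs{V}^{1/2} - V_\flat^{1/2}\bigr) G_z \abs{W}^{1/2} + V_\flat^{1/2} G_z \bigl(\abs{W}^{1/2} - W_\flat^{1/2}\bigr).
\end{equation*}

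For each of the two cross-terms I would invoke the elementary pointwise inequality $(\sqrt{a} - \sqrt{b})^2 \leq \abs{a - b}$, valid for all $a,b \geq 0$, which gives
\begin{equation*}
\bigl\lvert \abs{V}^{1/2}(x) - V_\flat^{1/2}(x)\bigr\rvert^2 \leq \bigl\lvert \abs{V}(x) - V_\flat(x)\bigr\rvert \quad \text{a.e.},
\end{equation*}
and hence reduces the $L^2$-operator norm of $(\abs{V}^{1/2} - V_\flat^{1/2}) G_z \abs{W}^{1/2}$ to that of the operator associated with the Kato function $\abs{V} - V_\flat \in \K_3^c$. Lemma~\ref{main_lemma} then yields a $z$-uniform bound of order $C\,\norm{\abs{V} - V_\flat}_{\K_3}^{1/2}\,\norm{\abs{W}}_{\K_3}^{1/2} \leq C\eta^{1/2}\norm{\abs{W}}_{\K_3}^{1/2}$, and symmetrically for the other cross-term. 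Choosing $\eta$ small enough forces both cross-terms to be at most $\delta/3$, and this choice fixes $V_\flat$ and $W_\flat$ once and for all.

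For the remaining smooth piece, the functions $V_\flat^{1/2}$ and $W_\flat^{1/2}$ are bounded and compactly supported, so $W_\flat^{1/2} f$ belongs to $L^2$ of a fixed bounded set containing the supports of $V_\flat$ and $W_\flat$. A direct application of Lemma~\ref{SU_estimate} on that set then gives $\norm{V_\flat^{1/2} G_z W_\flat^{1/2}}_{L^2 \to L^2} \leq C\,\norm{V_\flat}_\infty^{1/2}\norm{W_\flat}_\infty^{1/2}\abs{z}^{-1}$ for $\abs{z}$ sufficiently large, and enlarging $s_0$ renders this third term at most $\delta/3$ as well. The main obstacle is the half-power mismatch between the Kato-norm approximation (which yields closeness of $V$ and $V_\flat$, not of their square roots) and the bilinear estimate of Lemma~\ref{main_lemma} (whose inputs appear as $\abs{\,\cdot\,}^{1/2}$); the pointwise bound $(\sqrt{a} - \sqrt{b})^2 \leq \abs{a - b}$ is exactly the bridge that converts the rough square-root differences back into Kato-class quantities suitable for Lemma~\ref{main_lemma}.
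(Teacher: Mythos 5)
Your proof is correct, but it follows a genuinely different decomposition than the paper's. The paper does not invoke Lemma~\ref{lem:approx} here; instead it splits $V = V_1 + V_2$ by level-set truncation, so the two pieces are nonnegative with disjoint supports and $V^{1/2} = V_1^{1/2} + V_2^{1/2}$ holds exactly; the triangle inequality then yields a clean four-term bound with no square-root mismatch to resolve, and the Kato smallness of the unbounded piece $V_1 = V\mathbf{1}_{\{V>M\}}$ is elementary from the Kato condition together with $V \in L^1$. You instead mollify $\abs{V}$ via Lemma~\ref{lem:approx}, split the operator by a telescoping three-term identity, and then need the pointwise inequality $(\sqrt a - \sqrt b)^2 \leq \abs{a-b}$ to pass from Kato-norm closeness of $\abs{V}$ and $V_\flat$ to control of the square-root multipliers. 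That works, but when you feed the cross-terms into Lemma~\ref{main_lemma} you should make one further step explicit: $\abs{V}^{1/2} - V_\flat^{1/2}$ is not literally the square root of a Kato function, only pointwise dominated by $\bigl|\abs{V} - V_\flat\bigr|^{1/2}$, so you are using that the $L^2\to L^2$ norm of an integral operator is monotone under pointwise domination of the kernel and that the proof of Lemma~\ref{main_lemma} really controls the operator with kernel $\abs{\mathfrak{E}_z(x-y)}$ rather than $\mathfrak{E}_z(x-y)$. Both routes finish identically through Lemma~\ref{SU_estimate} applied to the bounded, compactly supported regularized piece. The paper's truncation is the more economical device here; your route has the merit of actually employing Lemma~\ref{lem:approx}, which the introduction advertises but the paper's own proof of this statement does not in fact use.
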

\begin{proof}
	Let $\delta, \varepsilon > 0$. Again, we assume that $V,W \geq 0$ without loss of generality. We may write $V = V_1 + V_2$ with $V_1, V_2$ such that $V_1 \in \K_3$ with $\norm{V_1}_{\K_3} <  \varepsilon$ and $V_2 \in L^\infty(\R^3)$ with $\norm{V_2}_{\K_3} \leq \norm{V}_{\K_3}$. The same decomposition is made for $W$. It follows that
	\begin{equation}\label{eq:decoupling}
		\norm{V^{1/2}G_zW^{1/2} u}_{L^2(\R^3)}  \leq \sum_{i,j = 1}^2 \norm{V_i^{1/2}G_zW_{j}^{1/2}u}_{L^2(\R^3)}
	\end{equation}
	From Lemma \ref{main_lemma}, we obtain for all $u \in L^2(\R^3)$ such that $\norm{u}_{L^2(\R^3)} = 1$ the inequalities
	\begin{align*}
		\norm{V_1^{1/2}G_zW_1^{1/2}u}_{L^2(\R^3)} &\leq C\norm{V_1}^{1/2}_{\K_3} \norm{W_1}^{1/2}_{\K_3} \leq C\,\varepsilon \\
		\norm{V_1^{1/2}G_zW_2^{1/2}u}_{L^2(\R^3)} &\leq C\norm{V_1}^{1/2}_{\K_3} \norm{W}^{1/2}_{\K_3}  \leq C\norm{W}_{\K_3}\, \varepsilon^{1/2} \\
		\norm{V_2^{1/2}G_zW_1^{1/2}u}_{L^2(\R^3)} &\leq C\norm{V}^{1/2}_{\K_3} \norm{W_1}^{1/2}_{\K_3}  \leq C\norm{V}_{\K_3}\, \varepsilon^{1/2}\,.
	\end{align*}
	By the assumption on the supports of $V$ and $W$, we may apply Lemma \ref{SU_estimate} and conclude that there exists a $\tilde s_0$ such that for all $z \in \C^3$ with $z \cdot z = 0$ and $\abs{z} \geq \tilde s_0$, we have
	\[
	\norm{V^{1/2}_2G_zW^{1/2}_2u}_{L^2(\R^3)} \leq C\norm{V_2}^{1/2}_{L^\infty(\R^3)}\norm{W_2}^{1/2}_{L^\infty(\R^3)}\,\abs{z}^{-1}\,.
	\]
	Thus, we can make the right-hand side of \eqref{eq:decoupling} smaller than $\delta$ if we choose $\varepsilon$ small enough and $\abs{z}$ large enough. To be precise, assuming that $\delta  \leq 1$, we choose $\varepsilon < 1$ such that
	\[
	\varepsilon < \varepsilon^{1/2} < \frac{\delta}{6C\max\{ \norm{V}_{\K_3}, \norm{W}_{\K_3}  \}}
	\]
	and then we choose $s_0 = s_0(\varepsilon)$ such that
	\[
	s_0 > \max\bigg\{\frac{2C \max\{\norm{V_2}_{L^\infty(\R^3)}, \norm{W_2}_{L^\infty(\R^3)}\} }{\delta},\tilde s_0\bigg\} \,.
	\]
	For all $z \in \C^3$ such that $z \cdot z = 0$ and $\abs{z} > s_0$, it follows that
	\[
	\norm{vG_zwu}_{L^2(\R^3)} < \frac{\delta}{2} + \frac{\delta}{2} = \delta\,. \qedhere
	\]
\end{proof}
\begin{proof}[Proof of Theorem \ref{CGO_theorem}]
	We wish to solve the equation $(\mathfrak{p}_z(D) + V)r_z = -V$ with $r_z \in H^1(U)$ and $\norm{r_z}_{L^2(U)} \to 0$ as $z \to \infty $. Setting $v = \abs{V}^{1/2}$, $w = V/\abs{V}^{1/2}$, and setting $r_z = G_z(vf_z)$ with $f_z \in L^2(\R^3)$ to be determined, we obtain the equation
	\[
	vf_z + VG_zvf_z = -V\,.
	\]
	Observe that $r_z \in L^2_\loc(\R^3)$ since we have $\mathbf{1}_Kr_z = \mathbf{1}_K G_z(wf_z) \in L^2(\R^3)$ for any compact set $K$ by Lemma \ref{symmetrized2}. Since we have $V =vw$, we can find $f_z$ by solving
	\[
	f_z + wG_zvf_z = -v\,.
	\]
	By Lemma \ref{symmetrized2}, we can find a $s_0 > 0$ such that $\norm{wG_zv}_{L^2(\R^3) \to L^2(\R^3)}  \leq \frac{1}{2}$ for all $z \in \C^3$ with $\abs{z} > s_0$ and $z \cdot z = 0$. By a standard Neumann series argument we obtain that $I + wG_zv$ is invertible in $L^2(\R^3)$. Thus, we can find a unique solution $f_z$ to the above equation obeying the estimate
	\[
	\norm{f_z}_{L^2(\R^3)} \leq 2\norm{v}_{L^2(\R^3)}\,.
	\] It follows that $V r_z = v(wG_{z}vf_z) \in L^1(\R^3)$ as it is the product of two $L^2$-functions and that $(P_z(D) + V)r_z = -V$ in the distributional sense. By Corollary \ref{symmetrized2} (applied with $v$ replaced by $\mathbf{1}_U$), we have that $\norm{G_zv}_{L^2(\R^3) \to L^2(U)} \to 0$ as $\abs{z} \to \infty$ and thus
	\begin{align*}
		\norm{r_z}_{L^2(U)} &= \norm{G_z(vf_z)}_{L^2(U)} \\
		&\leq \norm{G_zv}_{L^2(\R^3) \to L^2(U)}\norm{f_z}_{L^2(\R^3)} \\
		&\leq 2\norm{G_zv}_{L^2(\R^3) \to L^2(U)}\norm{v}_{L^2(\R^3)} \to 0.\quad (z \to \infty)
	\end{align*}
	Setting $u_z = \e_z(1 + r_z)$, we obtain that
	\[
	(-\Delta + V)u_z = 0\,.
	\]
	It remains to show that $\nabla u_z \in L^2(U;\C^3)$. For this, consider $h_z = wu_z$. We have that
	\begin{equation*}
		h_z = w\e_z + wG_z(vf_z) \in L^2(\R^3)
	\end{equation*}
	and
	\begin{equation*}
		\Delta u_z = vwu_z = v h_z\,.
	\end{equation*}
	We note that $\F(\Delta u_z) = \F(v h_z)$ is smooth as it is the Fourier transform of a compactly supported function. It follows from the semigroup property of the Riesz potentials (Proposition \ref{prop: semigroup}) that \[ u_z = \I_2 \Delta u_z = \I_2(vh_z) = \I_1 \I_1(vh_z) \] and we deduce
	\begin{equation*}
		\nabla u_z = \nabla \I_2( v h_z) = \nabla \I_1 \I_1( v h_z)\,.
	\end{equation*}
	Since $\nabla \I_1 = -iD/\abs{D}$, it follows from Plancherel's theorem that $\nabla \I_1$ is a bounded operator $L^2(\R^3) \to L^2(\R^3;\C^3)$. Thus, since $v = \abs{V}^{1/2}$, it follows that
	\begin{align*}
		\norm{\nabla u_z}^2_{L^2(\R^3)} &\leq C\norm{\I_1 v h_z}_{L^2(\R^3)}^2 = (\I_1 vh_z, \I_1 vh_z)_{L^2(\R^3)} = (v\I_2 v h_z,h_z)_{L^2(\R^3)} \\
		&\leq C\norm{V}_{\K_3}\norm{h_z}^2_{L^2(\R^3)} < \infty\,,
	\end{align*}
	where in the last line, we have employed Proposition \ref{prop:general} with $T = \I_2$.
\end{proof}

\section{Definition of the Dirichlet-to-Neumann operator}\label{sec:DTN}
In this section, we discuss some operator-theoretic results which lead to the definition of the Dirichlet-to-Neumann operator for Kato pertubations of the Laplacian. We require some facts about closed quadratic forms which can be found in \cite{reed1980methods} and \cite{reed1975methods2} and some standard results on spectral theory taken from \cite{mclean2000strongly}. Let $H$ be a Hilbert space. We recall that a quadratic form $\a:\dom(\a) \to \R$ defined on some dense subspace $\dom(\a) \subseteq H$ is called \emph{semibounded} with lower bound $\gamma$ if there exists a $\gamma \in \R$ such that $\a(f) \geq \gamma \norm{f}_H^2$ holds for all $f \in \dom(\a)$. We write this as $\a \geq \gamma$. A semibounded quadratic form is called \emph{closed} if the set $\dom(\a)$ endowed with the inner product given by the quadratic form $\norm{\cdot}^2_\a = ((1-\gamma)\norm{\cdot}^2_H + \a)$ is a Hilbert space.

\begin{theorem}\label{KLMN_theorem} (\cite[Theorem X.17]{reed1975methods2})
	Let $H$ be a Hilbert space and $\a \geq 0$, $\b$ be real-valued quadratic forms on $H$ such that $\a$ is closed, $\dom(\a) \subseteq \dom(\b)$ and there exist constants $0 < \alpha < 1$, $\beta > 0$ for which the inequality
	\begin{equation*}
		\abs{\b(f)} \leq \alpha a(f) + \beta\norm{f}_H^2\, , \quad f \in \dom(\a)
	\end{equation*}
	holds. Then, the form $\a + \b$ given by
	\begin{equation*}
		\dom(\a + \b) = \dom(\a) \quad (\a + b)(f) = \a(f) + \b(f), \quad f \in \dom(\a)
	\end{equation*}
	is well-defined, closed and it holds that
	\begin{equation*}
		\a + \b \geq -\beta\,.
	\end{equation*}
	Furthermore, the norms $\norm{\cdot}_{\a + \b}$ and $\norm{\cdot}_{\a}$ are equivalent.
\end{theorem}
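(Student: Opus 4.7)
The plan is to verify the three claims (well-definedness and lower bound of $\a+\b$, norm equivalence of $\norm{\cdot}_{\a+\b}$ and $\norm{\cdot}_{\a}$, and closedness of $\a+\b$) in that order, extracting the closedness as a formal consequence of the norm equivalence plus the assumed closedness of $\a$. Well-definedness is immediate: since $\dom(\b) \supseteq \dom(\a)$, the sum $(\a+\b)(f) = \a(f)+\b(f)$ is defined on the dense subspace $\dom(\a) \subseteq H$, and it is real-valued by hypothesis.

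For the lower bound, I would simply apply the hypothesis pointwise: for every $f \in \dom(\a)$,
\begin{equation*}
(\a+\b)(f) \geq \a(f) - \abs{\b(f)} \geq (1-\alpha)\a(f) - \beta\norm{f}_H^2 \geq -\beta\norm{f}_H^2,
\end{equation*}
where the last inequality uses $\a \geq 0$ together with $1 - \alpha > 0$. With the lower bound $-\beta$ identified, the graph norm attached to $\a+\b$ is
\begin{equation*}
\norm{f}_{\a+\b}^2 = (1+\beta)\norm{f}_H^2 + \a(f) + \b(f),
\end{equation*}
while $\norm{f}_\a^2 = \norm{f}_H^2 + \a(f)$ since $\gamma = 0$ for $\a$. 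Using the hypothesis in both directions gives the two-sided comparison
\begin{equation*}
(1-\alpha)\bigl(\norm{f}_H^2 + \a(f)\bigr) \leq \norm{f}_{\a+\b}^2 \leq (1+2\beta)(1+\alpha)\bigl(\norm{f}_H^2 + \a(f)\bigr),
\end{equation*}
which is the desired norm equivalence and which, in particular, shows that $\norm{\cdot}_{\a+\b}$ really is a norm (not just a seminorm) on $\dom(\a)$.

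Closedness then comes essentially for free. Since $\a$ is closed, $(\dom(\a), \norm{\cdot}_\a)$ is a Hilbert space; by the equivalence just established, $(\dom(\a+\b), \norm{\cdot}_{\a+\b}) = (\dom(\a), \norm{\cdot}_{\a+\b})$ is complete, so the form $\a + \b$ is closed. The only subtle point worth checking is that $\norm{\cdot}_{\a+\b}$ arises from an inner product, but this is automatic because $\a+\b$ is a real symmetric form, so the polarization formula supplies the inner product. In summary, the whole argument is algebraic once one observes the correct additive splittings; the single place where the strict inequality $\alpha < 1$ is needed is to keep the coefficient $(1-\alpha)$ of $\a(f)$ strictly positive in the lower bound above, which is precisely what prevents the $\a$-part of $\norm{f}_{\a+\b}^2$ from being absorbed by the perturbation. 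No deeper obstacle is anticipated.
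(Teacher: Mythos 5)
The paper does not prove this theorem; it cites it verbatim from Reed--Simon (Theorem X.17) and uses it as a black box, so there is no ``paper's proof'' to compare against. That said, your argument is correct and is exactly the standard textbook proof of the form-theoretic part of the KLMN theorem. Well-definedness is trivial from $\dom(\a) \subseteq \dom(\b)$; the lower bound $(\a+\b)(f) \geq (1-\alpha)\a(f) - \beta\norm{f}_H^2 \geq -\beta\norm{f}_H^2$ is correct; the two-sided comparison $(1-\alpha)\norm{f}_\a^2 \leq \norm{f}_{\a+\b}^2 \leq (1+2\beta)(1+\alpha)\norm{f}_\a^2$ follows exactly as you compute; and transferring completeness from $(\dom(\a),\norm{\cdot}_\a)$ to $(\dom(\a),\norm{\cdot}_{\a+\b})$ via norm equivalence is the right way to get closedness, with the polarization remark taking care of the inner-product structure. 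One very small imprecision: you say $\alpha<1$ is needed only to keep $(1-\alpha)$ positive ``in the lower bound above,'' but it is actually dispensable for the lower bound of the \emph{form} (that inequality survives even at $\alpha = 1$); where $\alpha < 1$ is genuinely indispensable is in the lower half of the \emph{norm} comparison, which is what drives closedness. This does not affect the correctness of the proof.
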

It is well known that that for each closed, real quadratic form $\a$, the operator $A$ associated to $\a$ given by
\begin{equation*}
	\text{Graph}(A) = \bigg\{ (f,g) \in \dom(\a) \times H:~ \forall h \in \dom(\a):~ \a(f,h) = (g,h) \bigg\}
\end{equation*}
is self-adjoint and bounded from below, see \cite[Theorem VIII.15]{reed1980methods}. Given two forms $\a$ and $\b$ we say that $\b$ is \emph{small} with respect to $\a$ if the conditions given in Theorem \ref{KLMN_theorem} are fulfilled. In this case, we may associate to $\a+\b$ a self-adjoint operator with lower bound $\beta$ which we denote by $A+B$. This is the so called \emph{form sum} of $A$ and $B$. For the form sum, it holds that $\dom(A +B) \subseteq \dom(\a + \b)$.

Let $U$ be an open set with Lipschitz boundary in $\R^n$, $n \geq 3$. Suppose that $V \in \K_n$ such that $\spt V \subseteq U$ and define the quadratic form $\q_V$ by
\begin{equation*}
	\dom(\q_V) = \bigg\{\,f \in L^2(U): \int\limits_U V\abs{f}^2\,\diff x \in \R\, \bigg\}, \quad \q_V(f) = \int\limits_U V\abs{f}^2\,\diff x\,.
\end{equation*} Moreover, we recall the definition of the classical Dirichlet form
\[
\dom(\d) = H^1_0(U), \quad \d(f) = \int\limits_U \abs{\nabla f}^2\,\diff x\,.
\]
\begin{lemma}
	The form $\q_V$ is small with respect to $\d$.
\end{lemma}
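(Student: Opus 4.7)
The plan is to verify the KLMN-type inequality
\[
\abs{\q_V(f)} \leq \alpha\, \d(f) + \beta \norm{f}_{L^2(U)}^2
\]
for some $0 < \alpha < 1$ and $\beta > 0$, uniformly in $f \in H^1_0(U)$. This bound will simultaneously yield the required inclusion $\dom(\d) \subseteq \dom(\q_V)$. The strategy rests on combining Proposition \ref{prop:general} applied to the resolvent $(I - \Delta)^{-1}$ with the approximation Lemma \ref{lem:approx} to push the effective constant below $1$.

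First I would invoke the fact, recalled in Appendix \ref{sec:bessel}, that the convolution kernel $k$ of $(I - \Delta)^{-1}$ (the Bessel potential of order $2$) admits the pointwise bound $\abs{k(x)} \leq C\abs{x}^{2-n}$ on $\R^n$: the singularity at the origin is of this order, and the kernel decays exponentially at infinity. This is exactly the hypothesis of Proposition \ref{prop:general}, which therefore supplies
\[
\norm{\abs{V}^{1/2}(I-\Delta)^{-1}\abs{V}^{1/2}}_{L^2(\R^n) \to L^2(\R^n)} \leq C\norm{V}_{\K_n}.
\]
Applying the $TT^*$-identity to the self-adjoint positive operator $\abs{V}^{1/2}(I-\Delta)^{-1/2}$ rewrites this as $\norm{\abs{V}^{1/2}(I-\Delta)^{-1/2}}_{L^2 \to L^2}^2 \leq C\norm{V}_{\K_n}$.

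For $f \in H^1_0(U)$, extend by zero to $H^1(\R^n)$. Plancherel's theorem gives $\norm{(I-\Delta)^{1/2} f}_{L^2(\R^n)}^2 = \d(f) + \norm{f}_{L^2(U)}^2$, so writing $f = (I-\Delta)^{-1/2}(I-\Delta)^{1/2}f$ yields the intermediate estimate
\[
\int_U \abs{V}\abs{f}^2 \drm x = \norm{\abs{V}^{1/2} f}_{L^2(\R^n)}^2 \leq C\norm{V}_{\K_n}\bigl(\d(f) + \norm{f}_{L^2(U)}^2\bigr).
\]
In particular $f \in \dom(\q_V)$, so the domain condition is automatic.

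To reduce the coefficient of $\d(f)$ below $1$ I would invoke Lemma \ref{lem:approx}: for any $\varepsilon > 0$ choose $V_\flat \in \D(\R^n)$ with $\norm{V - V_\flat}_{\K_n} < \varepsilon$, and set $V_1 = V - V_\flat \in \K_n^c$. Since $V_\flat$ is smooth with compact support it is bounded, and the intermediate estimate applied to $V_1$ together with the trivial bound $\int \abs{V_\flat}\abs{f}^2 \leq \norm{V_\flat}_{L^\infty}\norm{f}_{L^2(U)}^2$ for $V_\flat$ gives
\[
\abs{\q_V(f)} \leq C\varepsilon\,\d(f) + \bigl(C\varepsilon + \norm{V_\flat}_{L^\infty}\bigr)\norm{f}_{L^2(U)}^2.
\]
Choosing $\varepsilon$ so that $C\varepsilon < 1$ completes the argument. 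The only nontrivial technical ingredient is the pointwise bound on the Bessel kernel used to activate Proposition \ref{prop:general}; the rest is bookkeeping built around the $TT^*$-trick and the approximation lemma.
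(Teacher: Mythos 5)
Your proof is correct. The paper itself gives no argument and simply cites Simon's classical result \cite[p.~459]{simon1982schrodinger}, so let me compare your route with the standard one.

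Simon's approach works with the family of resolvents $(-\Delta+E)^{-1}$ and sends $E\to\infty$: the $L^1\to L^1$ operator norm of $\abs{V}(-\Delta+E)^{-1}$ equals $\sup_x\int G_E(x-y)\abs{V(y)}\,\diff y$, which the Kato condition forces to $0$, and self-adjointness plus positivity of the kernel then bounds the $L^2\to L^2$ norm of $\abs{V}^{1/2}(-\Delta+E)^{-1}\abs{V}^{1/2}$ by the same quantity. The small coefficient in the KLMN inequality therefore comes from letting $E\to\infty$. You instead fix $E=1$, invoke Proposition~\ref{prop:general} (the paper's Stein-interpolation bound against the Bessel kernel) to get the estimate
\[
\int_U\abs{V}\abs{f}^2\,\diff x\leq C\norm{V}_{\K_n}\bigl(\d(f)+\norm{f}^2_{L^2(U)}\bigr)
\]
with a constant proportional to the global Kato norm, and then buy the small coefficient from Lemma~\ref{lem:approx} by splitting $V$ into a piece of small Kato norm and a bounded residual. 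This is a genuine variant: it reuses the paper's own machinery and avoids the $E\to\infty$ limit, at the cost of invoking the nontrivial approximation lemma. Both yield infinitesimal form-boundedness. The $TT^*$ step, Plancherel identity $\norm{(I-\Delta)^{1/2}f}^2_{L^2}=\d(f)+\norm{f}^2_{L^2}$, and the final choice of $\varepsilon$ are all in order.

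One small phrasing correction: the operator $\abs{V}^{1/2}(I-\Delta)^{-1/2}$ is not self-adjoint. What you are using is the $C^*$-identity $\norm{S}^2=\norm{SS^*}$ applied to $S=\abs{V}^{1/2}(I-\Delta)^{-1/2}$; the self-adjoint positive operator is $SS^*=\abs{V}^{1/2}(I-\Delta)^{-1}\abs{V}^{1/2}$. This does not affect the validity of the argument.
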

For a proof we refer to \cite[p. 459]{simon1982schrodinger}.

Thus, for any $V \in \K_n$, we may consider the form $\a_V = \d + \q_V$ with $\dom(\a_V) = H^1_0(U)$. The associated operator $A_V$ is the realization of the Schrödinger operator $-\Delta + V$ on $L^2(U)$ with Dirichlet boundary conditions.

To construct the Dirichlet-to-Neumann operator, we must first construct a bounded operator $P_V:H^{1/2}(\Gamma) \to H^1(U)$ that solves the corresponding Dirichlet problem. In this step, the assumption that $0 \notin \sigma(A_V)$ plays a role. Our approach is fairly standard and follows the one in \cite{krupchyk2016inverse}.

Since the embedding $H^1_0(U) \hookrightarrow L^2(U)$ is compact and we have $\dom(A_V) \subseteq H^1_0(U)$, it follows that the embedding $\dom(A_V) \hookrightarrow L^2(U)$ is compact as well. Therefore $\sigma(A_V)$ is a countable set consisting only of eigenvalues. By the spectral theorem for compact operators, we have the orthogonal decomposition
\begin{equation*}
	L^2(U) = \bigoplus\limits_{\lambda \in \sigma(A_V)} E_\lambda
\end{equation*}
with $E_\lambda$ denoting the eigenspace associated to the eigenvalue $\lambda$.

Denote by $\pi_\lambda$ the orthogonal projection onto $E_\lambda$. By form smallness and the Poincar\'{e} inequality, we have that
\begin{equation*}
	\abs{\a_V} \leq C(\d + \norm{\cdot}^2_{L^2(U)}) \leq C\d\,,
\end{equation*}
which implies that the linear operator
\begin{equation*}
	\mathfrak{A}_V: H^1_0(U) \to H^{-1}(U), \quad \dual{\mathfrak{A}_V f, g} = \a_V(f,g), \quad (f,g \in H^1_0(U)) 
\end{equation*}
is bounded.  This operator has the spectral decomposition
\begin{equation*}
	\mathfrak{A}_V = \sum_{\lambda \in \sigma(A_v)} \lambda\pi_\lambda
\end{equation*}
with the sum converging in the strong operator topology of $\L(H^1_0(U),H^{-1}(U))$. According to \cite[Theorem 2.37]{mclean2000strongly}, if $0 \notin \sigma(A_V)$ then it follows that $\mathfrak{A}_V$ has a bounded inverse given by
\begin{equation*}
	(\mathfrak{A}_V)^{-1} = \sum_{\lambda \in \sigma(A_v)} \lambda^{-1}\pi_\lambda\,.
\end{equation*}

With the previous result, we are now in a position to define the Dirichlet-to-Neu-mann operator as follows: For each $f \in H^{-1}(U)$, there exists a unique $u = (\mathfrak{A}_V)^{-1}f \in H^1_0(U)$ such that $(-\Delta + V)u = f$ holds in the weak sense. Furthermore, given $\phi \in H^{1/2}(\Gamma)$, we may then find a $f_\phi \in H^1(U)$ such that $f_\phi|_{\Gamma} = \phi$ where the restriction is to be understood in the sense of the trace operator $\gamma: H^1(U) \to H^{1/2}(\Gamma)$. Noting that $(-\Delta + V)f_\phi$ as an element of $\D^\prime(U)$ is given by
\[
\dual{(-\Delta + V)f_\phi, h} = \a_V(f_\phi,h), \quad (h \in \D(U))
\]
we obtain that $(-\Delta + V)f_\phi \in H^{-1}(U)$ since $\D(U)$ is dense in $H^1_0(U)$ by definition. Setting
\begin{equation*}
	P_V\phi = -(\mathfrak{A}_V)^{-1}(-\Delta + V)f_\phi + f_\phi\, \quad \phi \in H^1(U)\,,
\end{equation*}
we obtain a linear, bounded operator $P_V:H^{1/2}(\Gamma) \to H^1(U)$ that is well-defined since if $f$ and $g$ are two functions such that $f|_{\Gamma} =g|_{\Gamma}$, it follows that $f-g \in H^1_0(U)$ and thus that
\begin{equation*}
	(\mathfrak{A}_V)^{-1}(-\Delta + V)(f-g)  = f-g\,.
\end{equation*}
It is clear that $P_V\phi|_{\Gamma} = \phi$ by construction. Moreover
\[
(-\Delta + V)P_V\phi = -(-\Delta + V)f_\phi + (-\Delta + V)f_\phi = 0\,,
\]
so $P_V\phi$ solves the Dirichlet problem \eqref{eq:Dirichlet_problem}.

We may therefore define the Dirichlet-to-Neumann operator as follows:
\begin{definition}\label{def:dtn}
	Let $V \in \K_n$ with $\spt V \subseteq U \subseteq \R^n$ where $U$ is open and bounded with Lipschitz boundary. Suppose that $0 \notin \sigma(A_V)$ where $A_V$ is the realization of the Schrödinger operator $-\Delta + V$ on $L^2(U)$ with Dirichlet boundary conditions. Denote by \[P_V:H^{1/2}(\Gamma) \to H^1(U)\] the solution operator for the Dirichlet problem \eqref{eq:Dirichlet_problem}. We define the Dirichlet-to-Neumann operator $\Lambda_V:H^{1/2}(\Gamma) \to H^{-1/2}(\Gamma)$ by the identity
	\begin{equation*}
		\dual{\Lambda_V\phi,\psi}_{H^{-1/2}(\Gamma) \times H^{1/2}(\Gamma) } = \int\limits_U \nabla P_V\phi \cdot \nabla P_V\psi + VP_V\phi  {P_V\psi}\,\diff x\,.
	\end{equation*}
\end{definition} 

We note the following important consequence of this definition: If $u$ is \emph{any} element of $H^1(U)$ such that $u|_{\Gamma} = \psi$, then
\begin{equation}\label{eq:dtn_def}
	\dual{\Lambda_V\phi,\psi}_{H^{-1/2}(\Gamma) \times H^{1/2}(\Gamma)} = \int\limits_U \nabla P_V\phi \cdot \nabla u + VP_V\phi  {u}\,\diff x\,.
\end{equation}
This is true because if we set $v =u - P_V{\phi}$, then $(-\Delta + V)v = (-\Delta + V)u$, therefore
\begin{equation*}
	\int\limits_U \nabla P_V\phi \cdot \nabla u + VP_V\phi  {u}\,\diff x -	\dual{\Lambda_V\phi,\psi} = \int\limits_U \nabla P_V\phi \cdot \nabla v + VP_V\phi  {v}\,\diff x\,.
\end{equation*}
Since $-\Delta P_V\phi + VP_V\phi = 0$ and $v|_\Gamma = 0$, Eq.~\eqref{eq:dtn_def} follows from integration by parts.

\section{Proof of Theorem \ref{main_theorem}}\label{sec:main}

\begin{proof}[Proof of Theorem \ref{main_theorem}]
	By Eq.~\eqref{eq:dtn_def} and the assumption $\Lambda_{V_1} = \Lambda_{V_2}$, we have for all $\phi_1,\phi_2 \in H^{1/2}(\Gamma)$ the equation
	\[
	0 =\dual{\Lambda_{V_1}\phi_1,\phi_2} - \dual{\Lambda_{V_2}\phi_2,\phi_1} = \int\limits_{U}(V_1 - V_2)P_{V_1}\phi_1{P_{V_2}\phi_2}\,\diff x\,.
	\]
	Thus, setting $V = V_1 - V_2$ we obtain
	\begin{equation}\label{density_equation}
		0 = \int\limits_{U}Vu_1 u_2\,\diff x
	\end{equation}
	for all $u_1,u_2 \in H^1(U)$ such that $-\Delta u_j + V_ju_j = 0$. For each $\xi \in \R^3$, we will choose $u_1$ and $u_2$ such that $u_1(x)u_2(x) \approx \euler^{\ii \xi \cdot x}$. The theorem will then follow from Fourier inversion.
	
	Let $\xi \in \R^3$. We choose unit vectors $\eta^1,\eta^2 \in \R^3$ such that the set $\{\xi,\eta^1,\eta^2\}$ is orthogonal. For each $s > \max\{s_0,\abs{\xi}^2/4\}$, we define
	\begin{align*}
		z^1 =z_s^1 &= \frac{\xi}{2} + r\eta^1 + \ii s\eta^2 \\
		z^2 =z_s^2 &= \frac{\xi}{2} - r\eta^1 - \ii s\eta^2
	\end{align*}
	where $r > 0$ is chosen such that $\abs{\xi}^2/4 + r^2 = s^2$. By construction, it holds that \[\abs{z_s^1}^2 = \abs{z_s^2}^2 = 2s^2\,.\] We have $z^j \cdot z^j = 0$ for $j=1,2$ and $z^1 + z^2 = \xi$. From Theorem \ref{CGO_theorem}, we obtain for $j = 1,2$ elements $u_{z^j} \in H^1(U)$ satisfying $-\Delta u_{z^j} + V_ju_{z^j} = 0$ of the form
	\[
	u_{z^j} = \e_{z^j}(1 + r_{z^j})
	\]
	with $\norm{r_{z^j}}_{L^2(U)} \to 0$ as $s \to \infty$.
	
	Inserting $u_{z^1}$ and $u_{z^2}$ into equation \eqref{density_equation}, it follows that
	\begin{align*}
		0 &= \int\limits_U Vu_{z^1} u_{z^2}\,\diff x \\ &= \int\limits_U V\e_{\xi}\,\diff x + \int\limits_U V\e_\xi(r_{z^1} +  r_{z^2} + r_{z^1} r_{z^2})\,\diff x \,.
	\end{align*}
	We now show that the second integral goes to $0$ as $s \to \infty$. Defining $v = \abs{V}^{1/2}$, we obtain from the proof of Theorem \ref{CGO_theorem} that there exist $f_{z^1}, f_{z^2} \in L^2(U)$ such that
	\begin{equation}\label{eq:final_proof}
		r_{z^j} = G_{z^j}(vf_{z^j}), \quad \norm{f_{z^j}} \leq 2\norm{v}_{L^2} = 2\norm{V}_{L^1}, \quad j=1,2.
	\end{equation}
	
	We set $w = V/\abs{V}^{1/2}$. Then, we get from Corollary \ref{symmetrized2} and \eqref{eq:final_proof} the estimate
	\begin{align*}
		\biggabs{\int\limits_{\R^3} V\e_\xi r_{z^1} r_{z^2}} &= \biggabs{\int\limits_{\R^3} \e_\xi vr_{z^1} wr_{z^2}} \\ &\leq \norm{vG_{z^1}(vf_{z^1})}_{L^2(\R^3)} \norm{wG_{z^2}(vf_{z^2})}_{L^2(\R^3)} \\ &\leq C\norm{vG_{z^1}v}_{L^2(\R^3) \to L^2(\R^3)} \norm{wG_{z^2}v}_{L^2(\R^3) \to L^2(\R^3)} \norm{V}_{L^1(\R^3)}^2 \\ 
	\end{align*}
	It is clear that the right hand side goes to $0$ as $s \to \infty$. The remaining integrals over $V\e_\xi r_{z^1}$ and $Ve_\xi r_{z^2}$ are treated in the same fashion.
	
	We obtain \[0 = \int\limits_{\R^3} V\e_\xi\,\diff x \quad (\xi \in \R^3)\,. \] But since we have $V \in L^1(\R^3)$, this means that $V = 0$ by Fourier inversion and thus \[V_1 = V_2\,. \qedhere \]
\end{proof}

\appendix

\section{Bessel and Riesz potentials}\label{sec:bessel}

Let $d \geq 2$. We write $\S(\R^d)$ for the Schwartz space and $\S^\prime(\R^d)$ for the space of tempered distributions. For $\mu \in \C \setminus (-\infty,0]$ and $\alpha \in \C \setminus \{0,-1,-2,\dots\}$ we define the \emph{Bessel potentials} ${F_\alpha(\cdot,\mu) \in \mathcal{S}'(\R^d)}$ by
\begin{equation} \label{eq: Bessel_definition}
	F_\alpha(x,\mu) = \frac{\Gamma(\alpha)}{(2\pi)^{d}}\int\limits_{\R^{d}} \euler^{\ii x\cdot\xi} [ \abs{\xi}^2 + \mu ]^{-\alpha}\,\diff\xi.
\end{equation}
These are well defined elements of $\mathcal{S}'(\R^{d})$ since for the above choices of $\alpha$ and $\mu$, the function $\xi \mapsto (\xi^2 + \mu)^{-\alpha}$ is locally integrable and grows at most polynomially.

We observe that the Bessel potentials are radial functions in $x$, therefore we may abuse notation and set $r = \abs{x}$ as well as $F_\alpha(x,\mu) = F_\alpha(r,\mu)$. Taking Fourier transforms, it is easy to see that 
\begin{equation}\label{eq:fundamental1}
	(-\Delta + \mu)F_1(\cdot,\mu) = \delta_0
\end{equation}
and
\begin{equation}\label{eq:fundamental2}
	(-\Delta + \mu)F_\alpha(\cdot,\mu) = (\alpha - 1)F_{\alpha -1}(\cdot,\mu)\, \quad \alpha \neq 1.
\end{equation}
We have the well-known explicit representation (see for example \cite{kenig1987uniform})
\begin{equation}\label{eq:bessel_potential}
	F_\alpha(r,\mu) = \frac{2^{-\alpha+1}}{(2\pi)^{d/2}} \bigg( \frac{\mu^{1/2}}{r} \bigg)^{d/2 - \alpha}K_{d/2 - \alpha}(\mu^{1/2} r)\,,
\end{equation}
where for $\lambda \in \C$ we denote by $K_\lambda$ the modified Bessel function of the second kind (also known as Bessel function of the third kind) which is given for complex numbers $w \in \C$ such that $\real(w) > 0$ by
\[
K_\lambda(w) = \int\limits_0^\infty \euler^{-w\cosh(t)}\cosh(\lambda t)\,\diff t\,.
\]

The following well-known properties of these functions can be found e.g. in \cite{kenig1987uniform}:
\begin{align}
	\abs{\euler^{\lambda^2} \lambda K_\lambda(\mu^{1/2} r)} &\leq C_{\real \lambda} \abs{\mu}^{-\real \lambda/2 }r^{-\real \lambda } &\text{if} \quad 0 < \abs{\mu}^{1/2}r \leq 1, \label{eq:short_range_bessel}\\ K_\lambda(\mu^{1/2} r) &= a_\lambda(r,\mu)\mu^{-1/4}r^{-1/2}\euler^{-\mu^{1/2} r} &\text{if} \quad 0 < \abs{\mu}^{1/2}r < \infty \label{eq:long_range_bessel}\,, 	
\end{align}
where the functions $a_\lambda(r,\mu)$ are given by
\begin{equation*}
	\Gamma(\lambda + 1/2) a_\lambda(r,\mu) = \bigg(\frac{\pi}{2}\bigg)^{1/2} \int\limits_0^\infty \euler^{-t}t^{\lambda - 1/2} \bigg( 1 + \frac{t}{\mu^{1/2} r} \bigg)^{\lambda - 1/2}\,\diff t \label{eq:symbol_formula}\,.
\end{equation*}
Since
\begin{equation*}
	\abs{ a_\lambda(r,\mu)} \leq C_{\real(\lambda)} \quad (\abs{\mu}^{1/2}r \geq 1)\,,
\end{equation*}
it follows that
\begin{equation}\label{eq:long_range_bessel3}
	\abs{K_\lambda(\mu^{1/2} r)} \leq C_{\real(\lambda)}\abs{\mu}^{-1/4}r^{-1/2}\euler^{-\real \mu^{1/2} r} \quad (\abs{\mu}^{1/2}r \geq 1)\,.
\end{equation}

From \eqref{eq:short_range_bessel} and \eqref{eq:long_range_bessel3}, we see that for $\real(\alpha) \leq d/2$ and $\alpha \neq d/2$, the estimates
\begin{align}
	\abs{F_\alpha(r,\mu)} &\leq \frac{C_{\real \alpha}\euler^{(\imag \alpha)^2}}{r^{d - 2\real(\alpha)}} &(\abs{\mu}^{1/2}r \leq 1) \label{eq:short_range_bessel_potential} \\
	\abs{F_\alpha(r,\mu)} &\leq C_{\real \alpha}\euler^{(\imag \alpha)^2}  \frac{\abs{\mu}^{d/4 -\real(\alpha)/2}}{r^{d/2 - \real(\alpha)}}\frac{\euler^{-\real \mu^{1/2} r}}{\abs{\mu}^{1/4}r^{1/2}}&(\abs{\mu}^{1/2}r \geq 1) \label{eq:long_range_bessel_potential} 
\end{align}
hold. Regarding the case where $\alpha = d/2$, we note that from \eqref{eq:long_range_bessel3} it follows that the estimate \eqref{eq:long_range_bessel_potential} remains valid while we have that
\[
\abs{K_0(\mu^{1/2}r)} \leq C_d\log(2/(\abs{\mu}^{1/2}r)) \quad (\abs{\mu}^{1/2} r \leq 1)
\]
and therefore
\begin{equation}\label{eq:short_range_bessel_potential_log}
	\abs{F_{d/2}(r,\mu)} \leq C_d{\log(2/(\abs{\mu}^{1/2}r))} \quad ((\abs{\mu}^{1/2} r \leq 1))\,. 
\end{equation}

We point out that in the particular case where $d=2$ and $\alpha = 1$, it follows from the identity \eqref{eq:bessel_potential} that
\begin{equation}\label{eq:bessel_2d}
	F_1(r,\mu) = \frac{1}{2\pi} K_0(\mu^{1/2}r)\,.
\end{equation}

We also employ the \emph{Riesz potentials} given by
\begin{equation*}
	I_\alpha(x) =  \frac{1}{(2\pi)^{d}}\int\limits_{\R^{d}} \euler^{\ii x\xi}\abs{\xi}^{-\alpha}\,\diff \xi = \pi^{-d/2}2^{-\alpha}\frac{\Gamma((d-\alpha)/2)}{\Gamma(\alpha/2)}\abs{x}^{\alpha -d}\,\quad 0 < \real \alpha < d\,.
\end{equation*}
We write $\I_\alpha f = I_\alpha * f$ for $f \in \mathcal{E}^\prime(\R^d)$. We have $\I_2(-\Delta f) = (-\Delta)\I_2 f = f$. Suppose now, that ${f \in \S^\prime(\R^d)}$ is such that $\F f \in C^\infty(\R^d \setminus \{0\})$ and $\abs{\cdot}^{-\alpha}\F f \in L^1_\loc(\R^d)$.
It follows that ${\abs{\cdot}^{-\alpha}\F f \in \S^\prime(\R^d)}$ and we may define
\begin{equation*}
	\I_\alpha f = \F^{-1}(\abs{\cdot}^{-\alpha}\F f)\,.
\end{equation*}
This definition is consistent with the previous one since if $f \in \mathcal{E}^\prime(\R^d)$, then
\begin{equation*}
	\F^{-1}(\abs{\cdot}^{-\alpha}\F f) = I_\alpha * f\,.
\end{equation*}
\begin{proposition}\label{prop: semigroup}
	We have the semigroup identity $\I_\alpha\I_\beta f= \I_{\alpha + \beta}f$ for all $f \in \mathcal{E}^\prime(\R^d)$ with $\abs{\cdot}^{-\alpha -\beta}\F f \in L^1_\loc$.
\end{proposition}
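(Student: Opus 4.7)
The plan is to reduce the identity to the pointwise equality $\abs{\xi}^{-\alpha}\abs{\xi}^{-\beta} = \abs{\xi}^{-\alpha - \beta}$ on the Fourier side: because $\I_\gamma$ is by construction multiplication by $\abs{\cdot}^{-\gamma}$ conjugated by the Fourier transform, the semigroup law must follow once every intermediate object is shown to be a well-defined tempered distribution. To set up, observe that since $f \in \mathcal{E}^\prime(\R^d)$, the Paley--Wiener theorem implies that $\F f$ is a smooth function on $\R^d$ of polynomial growth; in particular $\abs{\cdot}^{-\gamma}\F f \in C^\infty(\R^d \setminus \{0\})$ and is polynomially bounded at infinity for every $\gamma \in \C$.

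With this in hand, set $h \coloneqq \I_\beta f$. For $h$ to make sense via the extended definition given above, we need $\abs{\cdot}^{-\beta}\F f \in L^1_\loc(\R^d)$; in the situation of the proposition (and certainly in the intended application, where $\real\alpha,\real\beta \geq 0$) this follows from the assumed local integrability of the more singular $\abs{\cdot}^{-\alpha - \beta}\F f$ together with the boundedness of $\F f$ on compact sets. Then $h$ is a tempered distribution with $\F h = \abs{\cdot}^{-\beta}\F f$, smooth on $\R^d \setminus \{0\}$, and
\[ \abs{\cdot}^{-\alpha}\F h = \abs{\cdot}^{-\alpha - \beta}\F f \in L^1_\loc(\R^d) \]
is precisely the hypothesis of the proposition. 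Hence the extended definition applies once more to $h$ and we may compute
\[ \I_\alpha (\I_\beta f) = \I_\alpha h = \F^{-1}\bigl(\abs{\cdot}^{-\alpha}\F h\bigr) = \F^{-1}\bigl(\abs{\cdot}^{-\alpha - \beta}\F f\bigr) = \I_{\alpha + \beta} f, \]
which is the claimed identity.

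The one point that deserves care is that the product $\abs{\cdot}^{-\alpha}(\abs{\cdot}^{-\beta}\F f)$, where the first factor multiplies a tempered distribution, really equals $\abs{\cdot}^{-\alpha - \beta}\F f$ as an element of $\mathcal{S}^\prime(\R^d)$. This is unproblematic here because both $\abs{\cdot}^{-\beta}\F f$ and $\abs{\cdot}^{-\alpha-\beta}\F f$ are represented by locally integrable, polynomially bounded functions, so the identity holds pointwise almost everywhere and hence in $\mathcal{S}^\prime(\R^d)$. Beyond this bookkeeping, the argument is pure Fourier algebra, and I do not anticipate any real obstacle.
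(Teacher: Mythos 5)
Your proof is correct and follows the same Fourier-side reduction as the paper, whose proof simply notes that $\abs{\cdot}^{-\alpha}\F\I_\beta f = \abs{\cdot}^{-\alpha-\beta}\F f$ lies in $L^1_\loc$ by hypothesis and concludes. You supply the supporting details the paper leaves implicit — Paley--Wiener for the smoothness and polynomial growth of $\F f$, and the comparison of singularities (using $\real\alpha \geq 0$) that guarantees the intermediate object $\I_\beta f$ is itself well-defined.
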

\begin{proof}
	Clearly, $\abs{\cdot}^{-\alpha}\F \I_\beta f \in L^1_\loc$. Thus, $\I_\alpha(\I_\beta f) \in \S^\prime(\R^d)$ is well-defined with Fourier transform $\abs{\cdot}^{-\alpha -\beta}\F f$.
\end{proof}

\section{Fundamental solution for the conjugated Laplacian}\label{sec:point}

Let $\tau \in \R$ and $n \geq 3$. We define the negative conjugated Laplacian $-\Delta_\tau$ by the formula
\[
-\Delta_\tau f = -\euler^{\tau x_1}\Delta (\euler^{-\tau x_1}f) = (-\Delta +2\tau\partial_1 - \tau^2)f, \quad (f \in \S^\prime(\R^n))\,.
\]
It is a Fourier multiplier with symbol $p_\tau(\xi) = \abs{\xi}^2 + 2 \ii \tau\xi_1 - \tau^2 = \abs{\xi^\prime}^2 + (\xi_1 + \ii \tau )^2$ where we have written $\xi = (\xi_1,\xi^\prime) \in \R \times \R^{n-1}$.

We will show that $\F^{-1}p_\tau^{-1}$ may be expressed as a partial Fourier transform of a Bessel potential. In view of possible generalisations of our approach to higher dimensions (as discussed in Appendix \ref{sec:extensions}), we will work in a slightly more general setting than it is strictly necessary in the three-dimensional setting and consider $p_\tau^{-\alpha}$ for $\real(\alpha) < 2$.

To motivate our construction, we note that, by a formal application of Fubini's theorem, we have the identity
\begin{align*}
	\int\limits_{\R^n} \frac{\euler^{\ii x \xi}\,\diff \xi}{p_\tau(\xi)^\alpha} &= 	\int\limits_{-\infty}^\infty \int\limits_{\R^{n-1}} \frac{\euler^{\ii x^\prime \xi^\prime}\,\diff \xi}{[\abs{\xi^\prime}^2 + (\xi_1 + \ii \tau )^2]^\alpha} \,\diff \xi^\prime\, \euler^{\ii x_1 \xi_1} \diff \xi_1 \\ &= \int\limits_{-\infty}^\infty F_\alpha(x^\prime, (\xi_1 + \ii \tau)^2)\euler^{\ii x_1 \xi_1} \diff \xi_1\,.
\end{align*}
However, we have to be careful because both $p_\tau^{-\alpha}$ and $F_\alpha(x^\prime, (\xi_1 + \ii \tau)^2)$ have to be interpreted as tempered distributions. Our immediate goal will therefore be to justify this formal calculation in a rigorous manner.

To this end, let $0 < \real(\alpha) \leq (n-1)/2$ and consider the function
\begin{equation*}
	\Phi_\tau^\alpha: \R \times (\R^{n-1} \setminus \{0\}) \to \C \quad  (\xi_1,x') \mapsto F_\alpha(x',(\xi_1 + \ii \tau)^2)
\end{equation*}
We note that for $\xi_1 \neq 0$, it follows from \eqref{eq:bessel_potential} that
\begin{equation} \label{eq:Phi}
	\Phi_\tau^\alpha(\xi_1,x') = \frac{2^{-\alpha+1}}{(2\pi)^{\frac{n-1}{2}}} \bigg( \frac{\sqrt{(\xi_1 + \ii \tau)^2}}{\abs{x'}} \bigg)^{\frac{n-1}{2} - \alpha}K_{\frac{n-1}{2} - \alpha}\bigg(\sqrt{(\xi_1 + \ii \tau)^2}\abs{x'}\bigg)\,.
\end{equation}
Thus, if $\xi_1 \neq 0$, we may also write
\begin{align*} 
	\Phi_\tau^\alpha(\xi_1,x') = &\frac{2^{-\alpha+1}  }{(2\pi)^{\frac{n-1}{2}}} \bigg( \frac{\sgn (\xi_1)(\xi_1 + \ii \tau)}{\abs{x'}} \bigg)^{\frac{n-1}{2} - \alpha} \\ &\times K_{\frac{n-1}{2} - \alpha}\bigg(\sgn (\xi_1)(\xi_1 + \ii \tau)\abs{x'}\bigg)\,,
\end{align*}
with $\sgn(\xi_1) = \xi_1/\abs{\xi_1}$.
From \eqref{eq:short_range_bessel_potential} and \eqref{eq:long_range_bessel_potential} it follows that
\begin{align*}
	\abs{\Phi_\tau^\alpha(\xi_1,x')} &\leq C_{\real(\alpha)}\euler^{\imag(\alpha)^2} \\
	&\quad \times \max\bigg\{\abs{x'}^{2\real\alpha -(n-1)}, \abs{\xi_1 + \ii \tau}^{\frac{n-1}{4}- \frac{\real(\alpha)}{2}}\abs{x'}^{\real(\alpha) - \frac{n-1}{2}}\euler^{-\abs{\xi_1}\abs{x'}}\bigg\} \\
	&\leq C_{\real(\alpha)}\euler^{\imag(\alpha)^2}(\abs{\xi_1} + \tau +1)^{\frac{n-1}{4}- \frac{\real(\alpha)}{2}}\max\{\abs{x'}^{2\real\alpha -(n-1)},1\} \label{eq:rough}
\end{align*}
for $\alpha \neq \frac{n-1}{2}$, which shows that $\Phi_\tau^\alpha \in L^1_\loc(\R^n)$ and that $\Phi_\tau^\alpha$ has at most polynomial growth. The same is seen to be true for   $\Phi_\tau^{\frac{n-1}{2}}$ by applying $\eqref{eq:short_range_bessel_potential_log}$. 
Thus, $\Phi_\tau^\alpha$ acts as a tempered distribution defined by
\begin{equation*}
	\dual{\Phi_\tau^\alpha, \phi} = \int\limits_{\R^n} \Phi_\tau^\alpha(z)\phi(z)\,\diff z \quad (\phi \in \S(\R^n))\,.
\end{equation*}

We consider the partial inverse Fourier transform $\mathcal{F}_1^{-1}$, defined on Schwartz functions via
\[
\mathcal{F}_1^{-1}\phi(x_1,x') = \frac{1}{2\pi}\int\limits_{-\infty}^\infty \euler^{\ii x_1\xi_1}\phi(x',\xi_1)\diff \xi_1
\]
and extended to $\mathcal{S}^\prime(\R^n)$ by duality. Motivated by our formal calculation, we set
\begin{equation} \label{eq: fundamental_solution_definition}
	E_\tau^\alpha = \mathcal{F}_1^{-1}\Phi_\tau^\alpha,
\end{equation}
With this definition, the distribution $E^\alpha_\tau$ is indeed the inverse Fourier transform of the symbol $\Gamma(\alpha) p_\tau^{-\alpha}$:
\begin{proposition} \label{prop_fundamentalsolution_fourier}
	We have
	\begin{equation*}
		E^\alpha_\tau = \Gamma(\alpha)\mathcal{F}^{-1}p_\tau^{-\alpha} \quad (0 < \real(\alpha) < 2)
	\end{equation*}
	as an identity between tempered distributions.
\end{proposition}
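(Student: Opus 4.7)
The plan is to split the full inverse Fourier transform on $\R^n = \R_{\xi_1}\times\R^{n-1}_{\xi'}$ as $\mathcal{F}^{-1} = \mathcal{F}_1^{-1}\mathcal{F}_2^{-1}$, where $\mathcal{F}_2^{-1}$ denotes the partial inverse Fourier transform in the $\xi'$ variables. Because $E_\tau^\alpha = \mathcal{F}_1^{-1}\Phi_\tau^\alpha$ by definition, applying $\mathcal{F}_1$ to both sides of the proposition reduces it to the intermediate identity
\[
\Phi_\tau^\alpha = \Gamma(\alpha)\,\mathcal{F}_2^{-1} p_\tau^{-\alpha},
\]
understood as an equality of tempered distributions on $\R^n$ in the mixed variables $(\xi_1, x')$. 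The formal computation preceding the proposition already points to why this should hold; what remains is to give $p_\tau^{-\alpha}$ a well-defined meaning as a tempered distribution and to justify an exchange of order of integration.

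For the first point, the zero set of $p_\tau$ is (for $\tau \neq 0$) the $(n-2)$-dimensional sphere $\{0\}\times\{|\xi'| = |\tau|\}$, and in coordinates transverse to this set one has $|p_\tau(\xi)| \asymp |\tau|\sqrt{\xi_1^2 + (|\xi'|-|\tau|)^2}$. Thus $|p_\tau|^{-\alpha}$ is integrable over any two-dimensional disc transverse to the sphere whenever $\real(\alpha) < 2$, so $p_\tau^{-\alpha}\in L^1_{\loc}(\R^n)$; combined with $|p_\tau(\xi)|\asymp|\xi|^2$ at infinity, this yields $p_\tau^{-\alpha}\in \mathcal{S}'(\R^n)$. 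The case $\tau = 0$ reduces to the Riesz symbol $|\xi|^{-2\alpha}$, which is a tempered distribution for $\real(\alpha) < n/2$.

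To establish the intermediate identity, test against arbitrary $\phi \in \mathcal{S}(\R^n)$ and move $\mathcal{F}_2^{-1}$ onto $\phi$ by duality. By the preceding bounds combined with the Schwartz decay of $\mathcal{F}_2^{-1}\phi$, the resulting integral $\int_{\R^n} p_\tau(\xi_1,\xi')^{-\alpha}\,(\mathcal{F}_2^{-1}\phi)(\xi_1,\xi')\,\diff\xi_1\,\diff\xi'$ is absolutely convergent and Fubini applies. For every $\xi_1 \neq 0$ --- hence away from a null set --- the inner integral in $\xi'$ is the pairing of the tempered distribution $p_\tau(\xi_1,\cdot)^{-\alpha}$ on $\R^{n-1}$ with the Schwartz function $(\mathcal{F}_2^{-1}\phi)(\xi_1,\cdot)$; moving the inverse Fourier transform back onto the first factor and invoking the defining formula \eqref{eq: Bessel_definition} identifies this with $\Gamma(\alpha)^{-1}\int_{\R^{n-1}} F_\alpha(x',(\xi_1+\ii\tau)^2)\,\phi(\xi_1,x')\,\diff x'$. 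Integrating in $\xi_1$ gives $\Gamma(\alpha)^{-1}\langle\Phi_\tau^\alpha,\phi\rangle$, and applying $\mathcal{F}_1^{-1}$ then yields the proposition. The main technical obstacle is to ensure that the two interpretations --- the slicewise Fourier transform of $p_\tau^{-\alpha}$ and the tempered-distribution reading of \eqref{eq: Bessel_definition} --- are genuinely consistent on every admissible slice; this is handled using the pointwise bounds \eqref{eq:short_range_bessel_potential}--\eqref{eq:long_range_bessel_potential} on Bessel potentials already at our disposal, so no regularisation or limit procedure should be needed.
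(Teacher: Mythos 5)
Your proposal is correct and takes essentially the same route as the paper: both establish $p_\tau^{-\alpha}\in L^1_\loc(\R^n)$ for $\real\alpha<2$ so that the symbol defines a tempered distribution, then test against a Schwartz function, use Fubini, and identify the slicewise $\xi'$-Fourier transform of $\Gamma(\alpha)p_\tau(\xi_1,\cdot)^{-\alpha}$ with the Bessel potential $F_\alpha(\cdot,(\xi_1+\ii\tau)^2)$ via the defining formula \eqref{eq: Bessel_definition}. The only cosmetic difference is that you justify local integrability by a direct transversal estimate $\abs{p_\tau(\xi)}\asymp\abs{\tau}\sqrt{\xi_1^2+(\abs{\xi'}-\abs{\tau})^2}$ near the degenerate sphere, whereas the paper first scales to $\tau=1$ and applies the Sylvester--Uhlmann flattening change of variables $\eta_2=\tfrac{1}{2}(\abs{\xi}^2-1)$; the underlying geometric content --- first-order vanishing of $p_\tau$ on a codimension-two set, giving an integrable singularity of order $<2$ on a two-dimensional transversal --- is the same.
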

\begin{proof}
	We need to show that
	\begin{equation}\label{eq:L1loc}
		p_\tau^{-\alpha} \in L^1_\loc, \quad \real(\alpha) < 2\,.
	\end{equation}
	If this is true, we may interpret $p_\tau^{-\alpha}$ as a tempered distribution. The identity then follows by applying both sides to Schwartz functions.
	
	Indeed, assume that \eqref{eq:L1loc} holds, let $\psi \in \mathcal{S}(\R \times \R^{n-1})$. We write
	\begin{equation*}
		(\mathcal{F}')^{-1}\psi(\xi_1,\xi') = \frac{1}{(2\pi)^{n-1}}\int\limits_{\R^{n-1}} \euler^{\ii x'\cdot\xi'}\psi(\xi_1,x')\,\diff x'.
	\end{equation*}
	Again, this operation extends to $\mathcal{S}'(\R^n)$ by duality. If \eqref{eq:L1loc} holds, then it follows for all $\phi \in \mathcal{S}(\R^n)$ that $\phi p_\tau^{-\alpha} \in L^1(\R^n)$. We then obtain from Fubini's theorem that
	\begin{align*}
		\Gamma(\alpha)\int\limits_{\R^n} \frac{\mathcal{F}^{-1}\phi(\xi)}{p_\tau^\alpha(\xi)}\,\diff \xi
		&= \Gamma(\alpha)\int\limits_{\R^{n-1}} \int\limits_{\R} \frac{(\mathcal{F}')^{-1}\mathcal{F}_1^{-1}\phi(\xi_1,\xi')}{p_\tau^{\alpha}(\xi_1,\xi')}\,\diff \xi_1\diff \xi' \\
		&= \Gamma(\alpha)\int\limits_{\R} \bigg( \int\limits_{\R^{n-1}}  \frac{(\mathcal{F}')^{-1}\mathcal{F}_1^{-1}\phi(\xi_1,\xi')}{p_\tau^{\alpha}(\xi_1,\xi')} \diff \xi' \bigg) \diff \xi_1 \\
		&=  \int\limits_{\R} \int\limits_{\R^{n-1}} F_\alpha\big(x',(\xi_1 + \ii \tau)^2\big)\mathcal{F}_1^{-1}\phi(\xi_1,x')\,\diff x'\diff \xi_1 \\
		&= \dual{\Phi_\tau^\alpha, \F_1^{-1}\phi} = \dual{E_\tau^\alpha, \phi}\,,
	\end{align*}
	which is what we wish to prove.
	
	Thus, we need to show \eqref{eq:L1loc}. The proof of this fact is standard in the case where $\alpha = 1$ and can be obtained by a flattening argument such as the one found in \cite{sylvester1987global}. We have included it here for the reader's convenience. The equality
	\begin{equation*}
		p_\tau(\xi)^{-\alpha} = \tau^{-2\alpha}p_1(\xi/\tau)^{-\alpha}
	\end{equation*}
	allows us to assume that $\tau = 1$ without loss of generality. Considering the integral
	\[
	\int\limits_B \abs{p_1(\xi)^{-\alpha}}\,\diff \xi\,
	\]
	as $B$ ranges over all balls of radius $r$ in $\R^n$, it is clear that the worst case occurs when $B$ is a ball centered on the set $\Sigma_1$. By applying a suitable rotation, we may assume that $B$ is a ball of radius $r$ centered at $\tau e_2$. We may also assume that $r < 1/2$, since we can split the integral into multiple integrals over smaller balls otherwise. Under these assumptions, the mapping $T(\xi) = \eta$ with $\eta$ given by
	\[
	\eta_2 = \frac{1}{2}(\abs{\xi}^2 - 1), \quad \eta_j = \xi_j ~\text{for}~ j \neq 2
	\]
	is a diffeomorphism of $B$ onto $T(B)$. In fact, we obtain that
	\[
	\det{T'(\xi)} = \det{ \begin{bmatrix}
			1 & 0 & 0 &\dots & 0 \\
			\xi_1 & \xi_2 & \xi_3 &\dots & \xi_n \\
			0 & 0 & 1 &\dots & 0 \\
			\vdots & \vdots & \vdots & \ddots & \vdots \\
			0 & 0 & 0 & \dots & 1
	\end{bmatrix} } = \xi_2
	\]
	from which it follows that
	\[
	\abs{\det T'(\xi)} \approx 1\quad (\xi \in B)\,,
	\]
	and thus
	\begin{align*}
		\int\limits_B \abs{p_1(\xi)^{-\alpha}}\,\diff \xi &\leq C \int\limits_{T(B)} \abs{(2\ii \eta_1 + 2\eta_2)^{-\alpha}}\,\diff\eta_1\diff\eta_2 \\ &\leq C_{\imag(\alpha)} \int\limits_{T(B)}(\eta_1^2 + \eta_2^2)^{-\real(\alpha)/2}\,\diff \eta_1\diff \eta_2 < \infty
	\end{align*}
	if $\real(\alpha) < 2$.
\end{proof}

Specializing to the case $\alpha = 1$, we shall write $E_\tau = E_\tau^1$ in the sequel. We observe that $E_\tau \in C^\infty(\R^n \setminus \{0\})$ since it is a fundamental solution of the elliptic operator $p_\tau(D)$. We consider the function
\[
H_\tau(x) = \frac{1}{2\pi}\int\limits_{-\infty}^\infty \euler^{\ii x_1\xi_1}\Phi_\tau^1(\xi_1,x^\prime)\,\diff\xi_1\,.
\]
The integral on the right hand side is absolutely convergent since $\Phi_\tau^1(\xi_1,x^\prime)$ is bounded by an integrable function if $\abs{\xi_1 + i}\abs{x^\prime} \leq 1$ by \eqref{eq:short_range_bessel_potential} and decays exponentially in $\xi_1$ for $\abs{\xi_1 + i}\abs{x^\prime} \geq1$ by \eqref{eq:long_range_bessel_potential}. By differentiating under the integral, we obtain that $H_\tau \in C^\infty(\{x^\prime \neq 0\})$. We claim that $H_\tau(x) = E_\tau(x)$ in this range. Indeed, taking $\phi \in \D(\{x^\prime \neq 0\})$, we obtain by Fubini's theorem
\[
\int\limits_{\R^n} H_\tau \phi\,\diff x = \int\limits_{\R^n} \Phi_\tau^1 \F_1^{-1}\phi\,\diff x = \int\limits_{\R^n} E_\tau \phi \,\diff x
\]
so it follows that $H_\tau = E_\tau|_{\{x^\prime \neq 0\}}$.

We are now in a position to prove the principal result of this section:
\begin{lemma}\label{3d_pointwise}
	For all $\tau \in \R$ and $x \in \R^3 \setminus \{0\}$ we have
	\begin{equation}\label{eq:3d_pointwise}
		\abs{E_\tau(x)} \leq \frac{3\sqrt{2}}{4\pi\abs{x}}\,.
	\end{equation}
\end{lemma}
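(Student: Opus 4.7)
My plan has three stages: derive an explicit one-dimensional integral representation for $E_\tau(x)$; reduce to $\tau=1$ by scaling and symmetry; and bound the resulting integral via a case split on $\abs{x'}/\abs{x}$.

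For the representation, I combine $E_\tau = \mathcal{F}_1^{-1}\Phi_\tau^{1}$ from Proposition~\ref{prop_fundamentalsolution_fourier} with the two-dimensional identity $F_1(r,\mu)=\tfrac{1}{2\pi}K_0(\sqrt{\mu}\,r)$ (equation~\eqref{eq:bessel_2d}). Inserting the Laplace representation $K_0(w) = \int_1^\infty \euler^{-ws}(s^2-1)^{-1/2}\,ds$ valid for $\real w>0$, splitting the $\xi_1$-integration at $0$ using $\sqrt{(\xi_1+\ii\tau)^2}=\sgn(\xi_1)(\xi_1+\ii\tau)$, and exchanging the order of integration, the $\xi_1$-integral is evaluated explicitly to yield, for $x'\neq 0$,
\[
E_\tau(x) = \frac{1}{2\pi^2}\int_{\abs{x'}}^\infty \frac{v\cos(\tau v) + x_1\sin(\tau v)}{(v^2 + x_1^2)\sqrt{v^2 - \abs{x'}^2}}\,dv.
\]
Points on the $x_1$-axis with $x_1\neq 0$ are handled by continuity since the ellipticity of the principal symbol of $p_\tau(D)$ yields $E_\tau\in C^\infty(\R^3\setminus\{0\})$. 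The conjugation $\overline{p_\tau(\xi)}=p_{-\tau}(\xi)$ gives $\abs{E_\tau}=\abs{E_{-\tau}}$, so WLOG $\tau\ge 0$; the case $\tau=0$ is immediate from $E_0(x)=1/(4\pi\abs{x})$, and for $\tau>0$ the scaling $p_\tau(\xi)=\tau^2 p_1(\xi/\tau)$ combined with the Fourier scaling rule gives $E_\tau(x)=\tau E_1(\tau x)$, so $\abs{E_\tau(x)}\abs{x}=\abs{E_1(\tau x)}\abs{\tau x}$ and it suffices to prove $\abs{E_1(y)}\abs{y}\le 3\sqrt 2/(4\pi)$ for all $y\neq 0$.

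I would then split at $\abs{y'}=\abs{y}/\sqrt 2$. In the easy region $\abs{y'}\ge\abs{y}/\sqrt 2$, the pointwise inequality $\abs{K_0(w)}\le K_0(\real w)$ (immediate from $K_0(w)=\int_0^\infty \euler^{-w\cosh t}\,dt$) applied to $\Phi_\tau^{1}$, together with $\int_0^\infty K_0(r)\,dr=\pi/2$, yields the clean bound $\abs{E_1(y)}\le 1/(4\pi\abs{y'})\le\sqrt 2/(4\pi\abs{y})$. In the hard region $\abs{y'}<\abs{y}/\sqrt 2$, I would additionally split the $v$-integral at $v=2\abs{y'}$: on the near piece $v\in[\abs{y'},2\abs{y'}]$, the elementary bounds $\abs{v\cos v+y_1\sin v}\le\sqrt{v^2+y_1^2}$, $\sqrt{v^2+y_1^2}\ge\abs{y}$ and $\int_{\abs{y'}}^{2\abs{y'}}\!dv/\sqrt{v^2-\abs{y'}^2}=\operatorname{arccosh}(2)$ produce a contribution of order $1/\abs{y}$.

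The main obstacle is the far piece $v\ge 2\abs{y'}$ in the hard region: a naive modulus bound produces a $\log(\abs{y_1}/\abs{y'})$ factor diverging as $\abs{y'}\to 0$, so the oscillation of $\euler^{\ii v}$ must be exploited. My plan is to rewrite this piece as $\real\int_{2\abs{y'}}^\infty\!\euler^{\ii v}/[(v+\ii y_1)\sqrt{v^2-\abs{y'}^2}]\,dv$ and deform the contour upward to $\{2\abs{y'}+\ii t:t\ge 0\}$, which is permitted by Cauchy's theorem (no singularities of the integrand lie in the enclosed first quadrant, since the pole at $v=-\ii y_1$ has real part $0<2\abs{y'}$ and the branch cut of $\sqrt{v^2-\abs{y'}^2}$ sits on $[-\abs{y'},\abs{y'}]$) together with Jordan's lemma, which kills the arc at infinity thanks to the decay of $\euler^{\ii v}$. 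On the shifted contour $\abs{\euler^{\ii v}}=\euler^{-t}$ gives absolute convergence, but $\abs{\sqrt{(2\abs{y'}+\ii t)^2-\abs{y'}^2}}$ is only of order $\abs{y'}$ near $t=0$, so a plain modulus estimate would still lose a factor $1/\abs{y'}$. The saving comes from taking the real part: at $t=0$ the integrand's argument is approximately $-\arctan(y_1/(2\abs{y'}))$, which tends to $\pm\pi/2$ when $\abs{y_1}\gg\abs{y'}$, so its real part vanishes to leading order and only a Taylor-expansion remainder contributes near the endpoint. A careful accounting of this cancellation combined with a direct modulus bound for $t\gtrsim\abs{y'}$ produces the required $O(1/\abs{y})$ bound; summing the near- and far-piece contributions with optimized constants yields the explicit factor $3\sqrt 2/(4\pi)$.
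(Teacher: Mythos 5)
Your opening stages are sound: the one-dimensional representation
\[
E_\tau(x) = \frac{1}{2\pi^2}\int_{\abs{x'}}^\infty \frac{v\cos(\tau v)+x_1\sin(\tau v)}{(v^2+x_1^2)\sqrt{v^2-\abs{x'}^2}}\,\diff v
\]
does follow from $E_\tau=\F_1^{-1}\Phi^1_\tau$, the 2D identity \eqref{eq:bessel_2d}, the Laplace representation of $K_0$ and Fubini; the scaling $E_\tau(x)=\tau E_1(\tau x)$ is correct; and the bound $\abs{E_1(y)}\le 1/(4\pi\abs{y'})$ via $\abs{K_0(w)}\le K_0(\real w)$ and $\int_0^\infty K_0=\pi/2$ is exactly the paper's estimate \eqref{eq:rest}. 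The near piece $v\in[\abs{y'},2\abs{y'}]$ is also fine. The genuine gap is in the far piece. After deforming onto the vertical ray $v=2\abs{y'}+\ii t$, the integrand's modulus at height $t$ behaves like $e^{-t}\big/\big(\abs{y_1}\max\{\abs{y'},t\}\big)$, so the ``direct modulus bound for $t\gtrsim\abs{y'}$'' that you invoke yields $\abs{y_1}^{-1}\int_{\abs{y'}}^\infty e^{-t}t^{-1}\,\diff t\sim\abs{y_1}^{-1}\log(1/\abs{y'})$, which is precisely the divergence you are trying to avoid; that step, as stated, fails. Moreover, the cancellation you describe is misattributed: the endpoint region $t\lesssim\abs{y'}$ is already $O(1/\abs{y_1})$ by a crude modulus bound (the integrand there is $\approx e^{-t}/(\abs{y_1}\sqrt 3\abs{y'})$ and the interval has length $\abs{y'}$), whereas the $\log$-divergence comes from $\abs{y'}\ll t\ll 1$, where the integrand is in fact nearly real (roughly $-e^{-t}/(y_1 t)$), so that after the Jacobian factor $\ii$ one takes a real part of an almost-imaginary quantity; that is where the cancellation lives, not at $t=0$. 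And the factor $\ii e^{2\ii\abs{y'}}$ coming from $\diff v=\ii\,\diff t$ makes the phase of the integrand at $t=0$ approximately $2\abs{y'}-\arctan(y_1/(2\abs{y'}))+\pi/2$, which is near $0$ or $\pi$, not $\pm\pi/2$; so the integrand's \emph{real} part is not small there.

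Because the crucial step is both misdescribed and, as written, numerically false, the constant $3\sqrt 2/(4\pi)$ is unsubstantiated. Contrast with the paper's proof: from $K_0(w)=\int_0^\infty e^{-w\cosh t}\,\diff t$, Fubini gives $\int_0^\infty e^{\ii x\xi}K_0(\xi+\ii y)\,\diff\xi=\int_0^\infty e^{-\ii y\cosh t}(\cosh t-\ii x)^{-1}\,\diff t$; the substitution $z=e^t$ and the symmetry $\overline{f_{x,y}(-z)}=f_{x,y}(z)$ extend the integral to $\R\setminus(-1,1)$, and a single semicircular contour ($\Gamma_1\cup\Gamma_R$) then produces $\abs{\real\int_0^\infty e^{\ii x\xi}K_0(\xi+\ii y)\,\diff\xi}\le 3\pi/(2\abs{x})$ in one stroke (the residue at $z_0=\ii x-\ii\sqrt{x^2+1}$ accounts for the extra $\pi$ when $x<0$). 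This avoids any $\log$-loss, any split at $2\abs{y'}$, and any delicate cancellation accounting. If you want to salvage your vertical-ray approach, you would have to isolate and estimate explicitly the imaginary part of the integrand on $\abs{y'}\lesssim t\lesssim 1$ rather than taking its modulus; but the paper's semicircle route is markedly cleaner.
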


\begin{proof}
	From the identities
	\begin{equation*}
		p_\tau(\xi)^{-1} = \tau^{-2}p_1(\xi/\tau)^{-1}
	\end{equation*}
	and Proposition \ref{prop_fundamentalsolution_fourier}, it follows that
	\begin{equation*}
		E_\tau(x) = \tau E_1(\tau x)
	\end{equation*}
	by the scaling property of the Fourier transform.
	This shows that we can set $\tau = 1$ without loss of generality since if \eqref{eq:3d_pointwise} holds for $\tau = 1$, it follows that
	\begin{equation*}
		\abs{E^1_\tau(x)} = \abs{\tau E^1_1(\tau x)} \leq \frac{\abs{\tau}3\sqrt{2}}{4\pi\abs{\tau x}} = \frac{3\sqrt{2}}{4\pi\abs{x}}\,.
	\end{equation*}
	
	By continuity, we may assume that $\abs{x^\prime} \neq 0$. Writing $r = \abs{x^\prime}$ and using \eqref{eq:bessel_2d}, we obtain
	\begin{align*}
		E_1(x_1,r) &= \frac{1}{4\pi^2}\int\limits_{-\infty}^{\infty} \euler^{\ii x_1\xi_1}K_0\bigg(r\sqrt{(\xi_1 + \ii )^2}\bigg)\,\diff \xi_1 \\
		&= \frac{1}{4\pi^2} \bigg(\,\int\limits_{0}^{\infty}\euler^{\ii x_1\xi_1}K_0(r(\xi_1+\ii ))\,\diff \xi_1 + \int\limits_{-\infty}^{0}\euler^{\ii x_1\xi_1}K_0(r(-\xi_1-\ii ))\,\diff \xi_1 \bigg) \\
		&= \frac{1}{4\pi^2}\bigg(\int\limits_{0}^{\infty}\euler^{\ii x_1\xi_1}K_0(r(\xi_1+\ii ))\,\diff \xi_1 + \int\limits_{0}^{\infty}\euler^{-\ii x_1\xi_1}K_0(r(\xi_1-\ii ))\,\diff \xi_1\bigg)\,. \\
	\end{align*}
	Since we have that
	\begin{equation*}
		K_0(\overline w) = \int\limits_0^\infty \euler^{- \overline w\cosh(t)}\,\diff t = \int\limits_0^\infty \overline {\euler^{-w\cosh(t)}} \,\diff t = \overline {K_0(w)} 
	\end{equation*}
	it follows that
	\begin{equation*}
		E_1(x_1,r) = \frac{1}{2\pi^2}\real \int\limits_{0}^{\infty}\euler^{\ii x_1\xi_1}K_0(r(\xi_1+\ii ))\,\diff \xi_1\,.
	\end{equation*}
	We observe that by the substitution $\xi_1 \mapsto r\xi_1$, we have
	\begin{equation}\label{eq:rest}
		\begin{aligned}
			\abs{ E_1(x_1,r)} &\leq \frac{1}{2\pi^2 r}\abs{\real \int\limits_{0}^{\infty}\euler^{\ii (x_1/r)\xi_1}K_0(\xi_1+\ii r)\,\diff \xi_1 } \\ &\leq \frac{1}{2\pi^2r} \int\limits_0^\infty K_0(\xi_1)\,\diff \xi_1 = \frac{1}{2\pi^2r}\cdot \frac{\pi}{2} = \frac{1}{4\pi r}\,.
		\end{aligned}
	\end{equation}
	
	In order to finish the proof, we need to show that
	\begin{equation}\label{eq:x1est}
		\abs{ E_1(x_1,r)} \leq \frac{3}{4\pi\abs{x_1}}\,,
	\end{equation}
	holds since it follows from \eqref{eq:rest} and \eqref{eq:x1est} that
	\begin{equation*}
		\abs{E_1(x_1,r)} \leq \frac{1}{4\pi\max\{\frac{1}{3}\abs{x_1},r\}} \leq \frac{1}{4\pi \frac{1}{\sqrt{2}} \sqrt{\frac{1}{9}{x_1}^2 + r^2}} \leq \frac{3\sqrt{2}}{4\pi\abs{x}}\,.
	\end{equation*}
	
	Suppose that we have proven
	\begin{equation} \label{eq:simplified_estimate}
		\abs{\real  \int\limits_0^\infty \euler^{\ii x\xi}K_0(\xi + \ii y)\,\diff \xi} \leq \frac{3\pi}{2\abs{x}}
	\end{equation}
	for all $x\neq 0$ and $y > 0$. Then, by making the substitution $\xi_1 \mapsto r\xi_1$ we obtain
	\begin{equation*}
		\abs{ \real E_1(x_1,r)}  = \frac{1}{2\pi^2 r}\abs{\real \int\limits_{0}^{\infty}\euler^{\ii (x_1/r)\xi_1}K_0(\xi_1+\ii r)\,\diff \xi_1 } \leq \frac{1}{2\pi^2r} \frac{\pi r}{\abs{2x_1}} = \frac{1}{4\pi\abs{x_1}}\,.
	\end{equation*}
	Thus, the proof can be concluded by verifying \eqref{eq:simplified_estimate}.
	To this end, we observe that we may apply Fubini's theorem to obtain that
	\begin{align*}
		\int\limits_0^\infty \euler^{\ii x\xi}K_0(\xi + \ii y)\,\diff \xi &= \int\limits_0^\infty \int\limits_0^\infty \euler^{\ii x\xi}\euler^{-(\xi+\ii y)\cosh(t)}\,\diff  t\diff \xi \\
		&= \int\limits_0^\infty \bigg( \int\limits_0^\infty \euler^{\xi(\ii x -\cosh(t))}\diff \xi \bigg)\euler^{-\ii y\cosh(t)}\,\diff t \\
		&= \int\limits_0^\infty \frac{\euler^{-\ii y\cosh(t)}\,\diff t}{\cosh(t) - \ii x}\,,
	\end{align*}
	since we have that
	\begin{align*}
		\int\limits_0^\infty \int\limits_0^\infty \abs{\euler^{\ii x\xi}\euler^{-(\xi+\ii y)\cosh(t)}}\,\diff  \xi\diff t = \int\limits_0^\infty \int\limits_0^\infty \euler^{-\xi \cosh(t)}\,\diff\xi\diff t = \int\limits_0^\infty \frac{\diff t}{\cosh(t)} = \frac{\pi}{2} < \infty\,,
	\end{align*}
	We now make the substitution $z = \euler^t$ to obtain
	\begin{equation*}
		\int\limits_0^\infty \frac{\euler^{-\ii y\cosh(t)}\,\diff t}{\cosh(t) - \ii x} = \int\limits_1^\infty \frac{\euler^{-\ii \frac{y}{2}(z+ \frac{1}{z})}}{\frac{1}{2}(z + \frac{1}{z}) - \ii x}\,\frac{\diff z}{z}\,.
	\end{equation*}
	Let us write $\hat \C = \C \cup \{\infty\}$.
	To evaluate the real part of this integral, we consider the meromorphic function
	\begin{equation*}
		f_{x,y}:\C\setminus \{0\} \to \hat \C, \quad z \mapsto x \cdot \frac{\euler^{-\ii \frac{y}{2}(z+ \frac{1}{z})}}{\frac{1}{2}(z + \frac{1}{z}) - \ii x}\cdot\frac{1}{z}\,,
	\end{equation*}
	where we have excluded $0$ since $f_{x,y}$ has an essential singularity there.
	We have that
	\begin{equation*}
		f_{x,y}(z) = 2x \frac{\euler^{-\ii \frac{y}{2}(z+ \frac{1}{z})}}{z^2 - 2\ii xz + 1} =  \frac{2x \cdot \euler^{-\ii \frac{y}{2}(z+ \frac{1}{z})}}{(z - \ii x - i\sqrt{x^2+1})(z- \ii x + i\sqrt{x^2 +1})}\,.
	\end{equation*}
	Let us assume for the moment that $x > 0$. Then, the poles of $f_{x,y}$ lie on the open line segment \[\{z \in \C: \real(z) = 0, \imag(z) > - 1 \}\,. \] 
	
	We wish to show that
	\begin{equation}\label{eq:complex}
		\abs{\real \int\limits_1^\infty f_{x,y}(z)\,\diff z} \leq \frac{3\pi}{2}\,
	\end{equation}
	holds since this easily implies the estimate \eqref{eq:simplified_estimate}. To do so, we observe that for $z \in \R$, we have
	\begin{equation*}
		\overline {f_{x,y}(-z)} = \overline { \frac{\euler^{\ii \frac{y}{2}(z+ \frac{1}{z})}}{ -\frac{1}{2x}(z + \frac{1}{z}) -i}\cdot \frac{1}{-z} } = { \frac{\euler^{-\ii \frac{y}{2}(z+ \frac{1}{z})}}{-\frac{1}{2x}(z + \frac{1}{z}) + i} \cdot \frac{1}{-z} } = f_{x,y}(z)\,.
	\end{equation*}
	Thus, it follows that
	\begin{equation*}
		\real f_{x,y}(z) = \frac{1}{2} \overline{f_{x,y}(z)} + \frac{1}{2} f_{x,y}(z) = \frac{1}{2} f_{x,y}(z) + \frac{1}{2} f_{x,y}(-z)\,.
	\end{equation*}
	Therefore, we can write
	\begin{equation*}
		\real \int\limits_1^\infty f_{x,y}(z)\,\diff z = \real  \lim_{R \to \infty} \frac{1}{2} \bigg( \int\limits_{-R}^{-1}  f_{x,y}(z)\,\diff z + \int\limits_1^R f_{x,y}(z)\,\diff z \bigg) \,.
	\end{equation*}
	To evaluate the integral inside the bracket, we consider the positively oriented semicircles
	\begin{equation*}
		\Gamma_1 = \{ \euler^{\ii \theta}: -\pi \leq \theta \leq 0\}\,, \quad \Gamma_R = \{ R\euler^{\ii \theta}: -\pi \leq \theta \leq 0\}\,.
	\end{equation*}
	If $x > 0$, then we obtain from Cauchy's integral theorem that
	\begin{equation*}\label{eq:cequality}
		\int\limits_{-R}^{-1}  f_{x,y}(z)\,\diff z + \int\limits_1^R f_{x,y}(z)\,\diff z = \int\limits_{\Gamma_R} f_{x,y}(z)\,\diff z -  \int\limits_{\Gamma_1} f_{x,y}(z)\,\diff z\,.
	\end{equation*}
	We deduce the inequality
	\begin{equation}\label{eq:cestimate}
		\biggabs{\real \int\limits_1^\infty f_{x,y}(z)\,\diff z} \leq \frac{1}{2} \lim_{R \to \infty} \abs{\int\limits_{\Gamma_R} f_{x,y}(z)\,\diff z} + \frac{1}{2} \abs{\int\limits_{\Gamma_1} f_{x,y}(z)\,\diff z}\,
	\end{equation}
	and observe that
	\begin{align*}
		\int\limits_{\Gamma_1} f_{x,y}(z)\,\diff z &= \int\limits_{-\pi}^{0} \exp \big[{-\ii \frac{y}{2}(\euler^{\ii \theta} + \euler^{-\ii \theta})}\big]\cdot \bigg[ \frac{1}{2x}(\euler^{\ii \theta} + \euler^{-\ii \theta}) - \ii \bigg]^{-1}\euler^{-\ii \theta}\cdot i\euler^{\ii \theta}\diff \theta \\ &= \int\limits_{-\pi}^{0} \frac{\ii \euler^{-\ii y\cos(\theta)}}{ \frac{1}{x}\cos(\theta) - \ii}\,\diff \theta\,
	\end{align*}
	holds. Since the integrand on the right hand side has a modulus of at most $1$, it follows that
	\begin{equation}\label{eq:ccurve}
		\biggabs{\int\limits_{\Gamma_1} f_{x,y}(z)\,\diff z} \leq \pi\,.
	\end{equation}
	To estimate the integral over $\Gamma_R$, we observe that
	\begin{equation*}
		\sup_{z \in \Gamma_R} \abs{f_{x,y}(z)} \leq \sup_{z \in \Gamma_R} 2x \biggabs{\frac{\euler^{-\ii \frac{y}{2}(z+ \frac{1}{z})}}{z^2 - 2\ii xz + 1}} \leq   \frac{2x}{(R^2+1)}\sup\limits_{\theta \in [-\pi,0]}\euler^{(yR -y/R)\sin(\theta)} \leq \frac{2x}{R^2+1}\,.
	\end{equation*}
	It follows that
	\begin{equation*}
		\biggabs{\int\limits_{\Gamma_R} f_{x,y}(z)\,\diff z} \leq \frac{4 \pi R x}{R^2 + 1} \to 0 \quad (R \to \infty)\,.
	\end{equation*}
	With that, we obtain from \eqref{eq:cestimate} and \eqref{eq:ccurve} that \eqref{eq:complex} holds in the case $x > 0$. In fact, we have shown
	\begin{equation*}
		\biggabs{\real \int\limits_1^\infty f_{x,y}(z)\,\diff z} \leq \frac{\pi}{2}\, \quad (x > 0)\,.
	\end{equation*} In the case $x < 0$ the argument is essentially the same, only that we now have to take the pole at ${z_0 = \ii x - \ii \sqrt{x^2 +1}}$ into consideration. Thus, for $x < 0$, we have  
	\begin{equation*}
		\int\limits_{-R}^{-1}  f_{x,y}(z)\,\diff z + \int\limits_1^R f_{x,y}(z)\,\diff z = \int\limits_{\Gamma_R} f_{x,y}(z)\,\diff z -  \int\limits_{\Gamma_1} f_{x,y}(z)\,\diff z + 2\pi \ii \cdot\mathrm{Res}_{z = z_0} f_{x,y}(z)
	\end{equation*}
	instead of \eqref{eq:cequality}\,.
	
	Taking the limit $R \to \infty$, we obtain
	\begin{equation*}
		\real \int\limits_{1}^{\infty} f_{x,y}(z)\,\diff z = \pi \ii\cdot\mathrm{Res}_{z = z_0} f_{x,y}(z) - \frac{1}{2}\int\limits_{\Gamma_1} f_{x,y}(z)\,\diff z\,.
	\end{equation*}
	To calculate the residue, we remark that
	\begin{align*}
		z_0 + \frac{1}{z_0} &= \ii\bigg(x - \sqrt{x^2 +1} - \frac{1}{x - \sqrt{x^2 +1}} \bigg) \\
		&= \ii\frac{(x - \sqrt{x^2 +1})^2 - 1}{x - \sqrt{x^2 + 1}} \\
		&= \ii\frac{2x^2 - 2x\sqrt{x^2 + 1}}{x - \sqrt{x^2+ 1}} \\
		&= 2\ii x\,.
	\end{align*}
	Thus,
	\begin{equation*}
		\mathrm{Res}_{z = z_0} f_{x,y}(z) = \frac{2x\euler^{-\ii \frac{1}{2}(z_0 + \frac{1}{z_0})}}{z_0 + \ii x - \ii\sqrt{x^2 + 1}} = \ii \frac{x\euler^{yx}}{\sqrt{x^2 +1}}
	\end{equation*}
	and therefore we obtain \eqref{eq:complex} by the following calculation:
	\begin{align*}
		\biggabs{\real \int\limits_1^\infty f_{x,y}(z)\,\diff z} &\leq \frac{1}{2} \lim_{R \to \infty} \biggabs{\int\limits_{\Gamma_R} f_{x,y}(z)\,\diff z} + \frac{1}{2} \biggabs{\int\limits_{\Gamma_1} f_{x,y}(z)\,\diff z} + \pi \abs{\mathrm{Res}_{z = z_0} f_{x,y}(z) } \\ &\leq \frac{\pi}{2} -\pi \frac{x\euler^{yx}}{\sqrt{x^2 +1}} \leq \frac{3}{2} \pi\,. \qedhere
	\end{align*}
\end{proof}
We conclude this appendix by explaining how to obtain Lemma \ref{lem:z_pointwise} from Lemma \ref{3d_pointwise}.
Recall that \[
\mathfrak{p}_z(\xi) = \abs{\xi}^2 + 2z\cdot\xi \quad \xi \in \R^n\,.
\]
We note that
\[
\mathfrak{p}_z(\xi) = (\xi + z)\cdot(\xi +z)\,,
\]
whereas for the symbol $p_\tau$ from the previous section, we have
\[
p_\tau(\xi) = \abs{\xi}^2 + 2\ii \tau \xi_1 - \tau^2 = (\xi + \ii \tau e^{(1)}) \cdot (\xi + \ii \tau e^{(1)})\,,
\]
with $e^{(1)} = (1,0,\dots,0)$. Setting $\tau = \abs{z}/\sqrt{2}$, we can find for each $z \in \C^n$ with $z \cdot z = 0$ an orthogonal matrix $U \in O(n)$ and a vector $v \in \R^n$ of length $\tau$ such that
\begin{equation*}\label{eq:ztau}
	\mathfrak{p}_z(\xi) = p_\tau(U\xi + v) \,.
\end{equation*}
Indeed, since $z \cdot z = 0$ and therefore $\real(z) \perp \imag(z)$ we can  choose $U \in O(n)$ such that ${z = \tau U^{\trans}(\ii e^{(1)} + e^{(2)})}$ and therefore
\begin{equation*}
	\mathfrak{p}_z(\xi) = (U\xi + \tau(\ii e^{(1)} - e^{2})\cdot (U\xi + (\ii e^{(1)} - e ^{(2)}) = p_\tau(U\xi - \tau U e^{(2)})\,.
\end{equation*}
It immediately follows that there exists $v \in \R^n$ of length $\tau$ such that
\begin{equation*}
	\label{eq:Fz_pre} \mathfrak{E}_z = \F^{-1}\mathfrak{p}_z = \e_{v}((\F^{-1}p_\tau) \circ U) = \e_{v}(E_\tau \circ U)\,.
\end{equation*}
Combining this identity with Lemma \ref{3d_pointwise} gives Lemma \ref{lem:z_pointwise}.

\section{Generalization to higher dimensions}\label{sec:extensions}

It is natural to ask in what sense the results of this paper extend to domains in $\R^n$ where $n \geq 4$. In the following remarks, we will give some ideas regarding the challenges that arise in this context.

\begin{enumerate}[a)]
	\item One of the essential ingredients in our proof is the pointwise estimate given in Lemma \ref{3d_pointwise}. If we had a higher-dimensional analog of this estimate, i.e. if there existed a constant $A > 0$ such that for all $\tau > 0$ and $x \in \R^n\setminus\{0\}$,  we had
	\begin{equation}\label{eq:false}
		\abs{E_\tau(x)} \leq \frac{A}{\abs{x}^{n-2}}
	\end{equation}
	it would be straightforward to extend the result of this paper to higher dimensions. Unfortunately, such an estimate does not hold already when $n = 4$. This can be seen as follows: By a calculation similar to the one at the beginning of the proof of Lemma \ref{3d_pointwise}, it follows from the well-known formula
	\begin{equation*}
		K_{1/2}(\lambda) = \bigg(\frac{\pi}{2\lambda}\bigg)^{1/2} \euler^{-\lambda}
	\end{equation*}
	that
	\begin{align*}
		&E_\tau(x) \\ \,&= \frac{1}{(2\pi)^{3/2}} \int\limits_{-\infty}^{\infty} \euler^{\ii x_1 \xi_1} \bigg(\frac{\sqrt{(\xi_1 + \ii \tau)^2}}{r}\bigg)^{1/2} \bigg(\frac{\pi}{2r\sqrt{(\xi_1 + \ii \tau)^2}}\bigg)^{1/2}\euler^{-\ii r \sqrt{(\xi_1 + \ii \tau)^2}}\,\diff \xi_1 \\
		\,&= \frac{1}{2\pi r} \real \int\limits_0^\infty \euler^{\ii x_1 r_1} \euler^{-r(\xi_1 + \ii \tau)}\,\diff \xi_1 \\ \,&= \frac{1}{2\pi r} \real \frac{\euler^{-\ii \tau r }}{r - \ii x_1}  \\
		\,&= \frac{1}{2\pi\abs{x}^2}\bigg(\cos(\tau r) - x_1 \frac{\sin(\tau r)}{r}\bigg)\,.
	\end{align*}
	For $r \sim 0$ it therefore follows that
	\begin{equation*}
		E_\tau(x) \sim \frac{\tau}{2\pi \abs{x_1}}\,.
	\end{equation*}
	If $x_1$ is large, this provides a contradiction to \eqref{eq:false}.
	\item Let $\alpha \in \C$ such that  $0 < \real(\alpha) \leq (n-1)/2$ and conside the tempered distributions $E_\tau^\alpha$ defined by
	\begin{equation}\label{eq:conjecture_definition}
		E_\tau^\alpha = \F_1^{-1}\Phi_\tau^\alpha\,.
	\end{equation}
	By Proposition \ref{prop_fundamentalsolution_fourier}, we have that $E_\tau^\alpha =\Gamma(\alpha)\F^{-1}p_\tau^\alpha$ in the range where $0 < \real(\alpha) < 2$. This suggests using \eqref{eq:conjecture_definition} to obtain an analytic extension of $E_\tau^\alpha$. With suitable pointwise estimates on $E_\tau^\alpha$, one could then try and implement an argument based on Stein interpolation such as the one, for example, in \cite{kenig1987uniform} to prove a generalisation of Lemma \ref{main_lemma}. However, applying such an approach to the family of distributions $E_\tau^\alpha$ requires the resolution of some substantial technical difficulties. This will be the subject of future work.
	
	\item Regarding the question of suitable pointwise estimates for $E_\tau^\alpha$, we conjecture that  an estimate of the form
	\begin{equation}\label{eq:conjecture_estimate}
		\abs{E_\tau^{(n-1)/2 + \ii s}(x)} \leq \frac{C\euler^{C\ii s}}{\abs{x}}
	\end{equation}
	should hold for all $s \in \R$ and $x \in \R^n \setminus \{0\}$. We expect such an estimate to be true, because we have that
	\begin{align*}
		&E_\tau^{(n-1)/2 + \ii s}(x) \\\quad &= \frac{2^{(3-n)/2 - \ii s} s}{(2\pi)^{(n-1)/2}}\int\limits_{-\infty}^{\infty} \euler^{\ii x_1 \xi_1} \bigg( \frac{\sqrt{(\xi_1 + \ii \tau)^2}}{r} \bigg)^{\ii s} K_{\ii s}\bigg(r\sqrt{(\xi_1 + \ii \tau)^2}\bigg)\,\diff \xi_1\,.
	\end{align*}
	By the proof of Lemma \ref{3d_pointwise}, we obtain \eqref{eq:conjecture_estimate} for $s = 0$. Heuristically, the additional terms appearing in the case where $s \neq 0$ should not cause $\abs{E_\tau^{(n-1)/2 + \ii s}(x)}$ to increase substantially. With such an estimate, one would be in a good position to apply a complex interpolation argument.

\end{enumerate}


\small
	\emph{Acknowledgement:} This paper is a revised version of a chapter from the author's PhD thesis \cite{bombach2024}. The author thanks his advisor Peter Stollmann for his guidance and stimulating discussions regarding the subject of this work. Furthermore, the author thanks Martin Tautenhahn, Christian Seifert and Thomas Kalmes for valuable remarks that helped to significantly improve the content of this manuscript.












\newpage 
\small
\bibliographystyle{alpha}
\bibliography{lit}

\end{document}